\setlist[itemize]{leftmargin=*} %sets itemize indentation to 0
\setlist[enumerate]{leftmargin=*}
\theoremstyle{plain}
\newtheorem{theorem}{Theorem}[section]
\newtheorem{proposition}[theorem]{Proposition}
\newtheorem{lemma}[theorem]{Lemma}
\newtheorem{corollary}[theorem]{Corollary}
\theoremstyle{definition}
\newtheorem{defn}[theorem]{Definition}
\newtheorem*{defn*}{Definition}
\newtheorem{problem}[theorem]{Problem}
\def\expandafter\normalsize\expandafter{%
    \normalsize
    \setlength\abovedisplayskip{4pt}
    \setlength\belowdisplayskip{4pt}
    \setlength\abovedisplayshortskip{4pt}
    \setlength\belowdisplayshortskip{4pt}
}
\newcommand{\calL}{\mathcal{L}}
\newcommand{\calU}{\mathcal{U}}
\newcommand{\cupdot}{\mathbin{\mathaccent\cdot\cup}}
\newcommand{\ba}{\mathbf{a}}
\newcommand{\bb}{\mathbf{b}}
\newcommand{\bx}{\mathbf{x}}
\newcommand{\by}{\mathbf{y}}
\newcommand{\bz}{\mathbf{z}}
\newcommand{\bit}{\mathrm{bit}}
\newcommand{\Star}{\mathrm{Star}}
\newcommand{\tw}{\mathrm{tw}}
\newcommand{\ex}{\mathrm{ex}}
\title{\vspace{-0.8cm} The growth rate of multicolor Ramsey numbers of $3$-graphs}
\author{Domagoj Brada\v{c}\thanks{Department of Mathematics, ETH, Z\"urich, Switzerland. Research supported in part by SNSF grant 200021\_196965. Email: \textbf{\{domagoj.bradac, benjamin.sudakov\}@math.ethz.ch}.}
\and Jacob Fox\thanks{Department of Mathematics, Stanford University, Stanford, CA. Email: \textbf{jacobfox@stanford.edu}. Research
supported by NSF Awards DMS-1953990 and DMS-2154129.}
\and Benny Sudakov\footnotemark[1]}
\date{}
\begin{document}
    \maketitle
    \begin{abstract}
        The $q$-color Ramsey number of a $k$-uniform hypergraph $G,$ denoted $r(G;q)$, is the minimum integer $N$ such that any coloring of the edges of the complete $k$-uniform hypergraph on $N$ vertices contains a monochromatic copy of $G$. The study of these numbers is one of the most central topics in combinatorics. One natural question, which for triangles goes back to the work of Schur in 1916, is to determine the behavior of $r(G;q)$ for fixed $G$ and $q$ tending to infinity. In this paper we study this problem for $3$-uniform hypergraphs and determine the tower height of $r(G;q)$ as a function of $q$. More precisely, given a hypergraph $G$, we determine when $r(G; q)$ behaves polynomially, exponentially or double-exponentially in $q$. This answers a question of Axenovich, Gy\'{a}rf\'{a}s, Liu and Mubayi.
    \end{abstract}
    
    \section{Introduction}
    
    Given $k$-uniform hypergraphs, or $k$-graphs, $G_1, \dots, G_q,$ let $r(G_1, \dots, G_q)$ denote their Ramsey number, which is the minimum positive integer $N$ such that in every coloring of the edges of the complete $k$-graph $K_N^{(k)}$ on $N$ vertices with color set $[q]=\{1,\ldots,q\}$ there is a color $i$ for which there is a monochromatic copy of $G_i$ in color $i$. When $G_1 = \dots = G_q = G,$ we write $r(G; q)$ and when $G = K_n^{(k)},$ we sometimes write $r_k(n;q).$ The existence of these numbers was famously proved by Ramsey~\cite{ramsey} in 1930. Since then, obtaining good bounds on $r_k(G;q)$ for various (hyper)graphs $G$ has been among the most significant areas of study in discrete mathematics. One of the central problems in this area is to obtain good bounds on the so-called diagonal graph Ramsey number, $r_2(n; 2),$ for which the current best bounds are $\sqrt{2}^n < r (n; 2) \le (4 - \epsilon)^n,$ where the lower bound is due to Erd\H{o}s \cite{erdos} and the upper bound is a recent breakthrough of Campos, Griffiths, Morris and Sahasrabudhe \cite{CGMS}. For a survey on graph Ramsey numbers we refer the reader to~\cite{conlon2015recent}.

    Another classical direction in Ramsey theory is given a fixed graph $G,$ to determine the behavior of $r(G; q)$ as the number of colors, $q$, tends to infinity. In the case when $G$ is a triangle, the study of this problem goes back to the work of Schur in 1916, who proved a Ramsey-type result for sum-free sets (see \cite{NR}). For general $G$, this problem exhibits the following dichotomy. If $G$ is bipartite, then $r(G; q) = O(q^C)$ for some constant $C = C(G).$ Indeed, this follows from the famous theorem of K\"{o}v\'{a}ri, S\'{o}s and Tur\'{a}n~\cite{kovari-sos-turan} stating that for bipartite $G,$ there is a constant $\epsilon =\epsilon(G) > 0$ such that for large enough $n$, any graph on $n$ vertices with at least $n^{2-\varepsilon}$ edges contains a copy of $G.$ On the other hand, if $G$ is not bipartite, then we have $r(G; q) > 2^q.$ This follows by considering the $q$-edge-coloring of the complete graph on the vertex set $\{0, 1\}^q$ where a pair of vertices is colored by the index of the first coordinate in which their binary representations differ. In this coloring, every color class is a bipartite graph, so there is no monochromatic copy of $G$. Day and Johnson~\cite{day17} have improved this lower bound by showing that for any non-bipartite graph $G,$ there is a positive $\epsilon > 0$ such that $r(G; q) > (2 + \epsilon)^q.$ Regarding upper bounds, a simple extension of the neighbour chasing argument of Erd\H{o}s and Szekeres~\cite{erdos-szekeres} yields $r(K_n; q) < q^{nq}.$ Hence, for fixed non-bipartite $G,$ we have $(2 + \epsilon)^q \le r(G; q) \le 2^{O(q \log q)}.$ Determining whether these numbers should be exponential or not is a very old and major open problem even for the simplest case when $G = K_3$ for which Erd\H{o}s offered a prize of \$250~\cite{chung1998erdos}. This problem has an interesting connection to the celebrated Shannon capacity in information theory. Namely, the maximum possible Shannon capacity of a graph with independence number $t$ is equal to $\lim_{q \rightarrow \infty} r(K_{t+1}; q)^{1/q}$ (see e.g.~\cite{alon2020lovasz}).

    Although already for graph Ramsey numbers there are significant gaps between
the lower and upper bounds, our knowledge of hypergraph Ramsey numbers is even
weaker. In the clique case, Erd\H{o}s and Rado~\cite{erdos1952combinatorial} showed that for some constant $c = c(q,k),$ the Ramsey numbers satisfy $r_k(n; q) \le \tw_k(cn),$ where $\tw_k(x)$ denotes the tower function defined as $\tw_1(x) = x$ and $\tw_k(x) = 2^{\tw_{k-1}(x)}$ for $k\ge 2.$ On the other hand, an ingenious construction of Erd\H{o}s and Hajnal (see e.g. \cite{graham1991ramsey}), known as the stepping-up lemma, allows one to obtain a lower bound for hypergraphs of uniformity $k+1$ from lower bounds for uniformity $k,$ essentially gaining an extra exponential at every step. However, this construction only works if the number of colors, $q,$ is at least $4$ or the uniformity, $k$, is at least $3.$ Therefore, we have $r_k(n; 4) = \tw_k(\Theta(n))$ and the order of magnitude of $r_k(n; 2)$ depends on the behaviour of $3$-uniform case. 
    The question whether $r_3(n; 2)$ grows doubly-exponentially remains one of the most intriguing open problems.
    We refer the reader to the surveys \cite{conlon2015recent, mubayi2020survey} for more details about hypergraph Ramsey problems.
    
    The focus of this work is to determine the growth rate of $r(G; q)$ for fixed $G$ and $q$ tending to infinity. This is a natural variant of Erd\H{o}s' question (mentioned above) for hypergraphs. We say that a function $f(q)$ grows as a tower of height $h$ if $\tw_h(\Omega(q^c)) \le f(q) \le \tw_h(O(q^C))$ for some constants $c, C > 0.$ We study the following problem.

    \begin{problem} \label{prob:main}
        Given a fixed $k$-uniform hypergraph $G$, determine the integer $h$ (if it exists) such that $r(G; q)$ grows as a tower of height $h$ as $q$ tends to infinity.
    \end{problem}
    Clearly, not every function grows as a tower of some height, but it might be natural to guess that this is the case for $r(G; q)$ for any fixed $k$-uniform hypergraph $G.$ As discussed above, in the graph case we have that $r(G; q)$ grows as a tower of height $1$ if $G$ is bipartite (and has at least two edges) whereas otherwise it grows as a tower of height $2$. The $3$-uniform case was first studied almost 50 years ago by Abbott and Williams~\cite{abbott-williams} who, using a modification of the stepping-up construction showed that $r(K^{(3)}_4; q)$ grows as a tower of height $3$. The $3$-uniform case has been revisited in more depth recently by Axenovich, Gy\'{a}rf\'{a}s, Liu and Mubayi~\cite{axenovich}. They observed that $r(G;q)$ is at most polynomial, i.e. grows as a a tower of height $1$ in $q$ if and only if $G$ is tripartite and they determined several classes of $3$-graphs for which $r(G;q)$ grows as a tower of height $2$. Furthermore, they ask the following question.

    \begin{problem}[\cite{axenovich}] \label{prob}
        For which $3$-uniform hypergraphs $G$, is $r(G; q)$ double exponential? Are there other jumps that the Ramsey function exhibits?
    \end{problem}
    
    We resolve Problem~\ref{prob:main} in the case $k=3$ and answer the question of Axenovich, Gy\'{a}rf\'{a}s, Liu and Mubayi in following strong sense. We show that for every non-tripartite $3$-uniform hypergraph $G,$ either $2^{\Omega(q)} < r(G;q) < 2^{q^{C}}$ for some $C = C(G)$ or $2^{2^{q/2}} < R(G;q)$ and characterize which $3$-graphs have which behaviour.
        
    To state our main result formally, we first require a definition.
    \begin{defn} \label{def:reducible}
        Let $G$ be a $3$-graph. A set $U \subseteq V(G)$ with $2 \le |U| < |V(G)|$ is called \emph{collapsible} if no edge of $G$ intersects $U$ in exactly two vertices. Let $v^*$ denote a new vertex and let $H$ be the $3$-graph with vertex set $(V(G) \setminus U) \cup \{v^*\}$ and edge set $E(H) = \{ e \in E(G) \, \vert \, e \cap U = \emptyset\} \cup \{ xyv^* \, \vert \, \exists u \in U, xyu \in E(G)\}.$ We say that $H$ is obtained from $G$ by \emph{collapsing} $U$ and that $G$ is \emph{reducible} to the pair $(H, G[U])$ by collapsing $U$.
    \end{defn}

    We define a nested sequence of sets of $3$-graphs $\calU_0 \subseteq \calU_1 \subseteq \dots$ as follows. First, $\calU_0$ consists of all tripartite $3$-graphs. The set $\calU_1$ contains the $3$-graphs for which there is a subset of vertices intersecting every edge in exactly one vertex (note that $\calU_1 \supseteq \calU_0$). For $i > 1,$ $\calU_i$ is the maximal set containing $\calU_{i-1}$ and any hypergraph which is reducible to some $(H, F)$ with $H \in \calU_{i-1}, F \in \calU_i.$ Note that if $G$ is reducible to $(H, F),$ then by definition, $v(H), v(F) < v(G),$ implying that the sets $\calU_{i}$ are indeed well-defined. Let $\calU \coloneqq \bigcup_{i \ge 0} \calU_i.$

    We are ready to state our main result determining the behaviour of $r(G;q)$ for any fixed $3$-graph $G$.

    \begin{theorem} \label{thm:3-graph}
        Let $G$ be a fixed $3$-uniform hypergraph with at least two edges.
        \begin{enumerate}[label=\alph*)]
            \item \label{thm:tripartite} If $G \in \calU_0$ (i.e. $G$ is tripartite), then $r(G;q) = q^{\Theta(1)}.$
            \item \label{thm:3-graph-single} If $G \in \calU \setminus \calU_0,$ then $2^{\Omega(q)} \le r(G;q) \le 2^{q^{O(1)}}.$ More precisely, if $G \in \calU_\ell,$ then $r(G; q) \le 2^{O(q^\ell \log q)}.$ 
            \item \label{thm:double-exp} If $G \not\in \calU,$ then $2^{2^{q/2}} \le r(G;q) \le 2^{2^{O(q \log q)}}.$
        \end{enumerate}
    \end{theorem}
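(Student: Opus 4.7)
The upper bound uses the K\H{o}v\'{a}ri--S\'{o}s--Tur\'{a}n-type extremal result for tripartite 3-graphs: any 3-graph on $N$ vertices with $\Omega(N^{3-\delta})$ edges (for some $\delta=\delta(G)>0$) contains $G$. The majority color in a $q$-coloring of $K_N^{(3)}$ has at least $\binom{N}{3}/q$ edges, so $N=q^{O(1)}$ suffices. The lower bound $r(G;q)\ge q^{\Omega(1)}$ is a first-moment calculation: a uniformly random $q$-coloring of $K_N^{(3)}$ has expected number of monochromatic copies of $G$ equal to $O(N^{v(G)}q^{1-e(G)})$, which is $o(1)$ for $N$ a sufficiently small polynomial in $q$ (using $e(G)\ge 2$).

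\textbf{Part (b).} The lower bound $r(G;q)\ge 2^{\Omega(q)}$ follows from non-tripartiteness: color the triple $\{u,v,w\}\subset\{0,1\}^q$ by the first coordinate on which the three are not all equal; this is a $q$-coloring of $K_{2^q}^{(3)}$ whose every color class is tripartite, so it contains no monochromatic non-tripartite $G$. For the upper bound I would induct lexicographically on $(\ell,v(G))$, with the $\ell=0$ case handled by (a). In the base case $\ell=1$: if $U$ of size $a$ meets every edge of $G$ in exactly one vertex, an Erd\H{o}s--Rado-style argument selects $a$ vertices $v_1,\dots,v_a$ of $K_N^{(3)}$, a common large set $W$ disjoint from them, and a single color $c$ such that for each $i$ many pairs in $W$ extend $v_i$ to a color-$c$ edge; one more application of the tripartite K\H{o}v\'{a}ri--S\'{o}s--Tur\'{a}n bound inside $W$ simultaneously embeds the link graphs $L_G(u_i)$, yielding a monochromatic $G$ from $N=2^{O(q\log q)}$ vertices. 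For the inductive step, if $G$ reduces to $(H,F)$ with $H\in\calU_{\ell-1}$ and $F\in\calU_\ell$ (both strictly smaller than $G$), I apply the outer IH to $H$, amplified so as to produce many monochromatic copies of $H$ sharing the image of $V(H)\setminus\{v^*\}$ with $\phi(v^*)$ ranging over a large common set $S$ in a single color $c$; the inner IH on $v(G)$ applied to $F\in\calU_\ell$ then extracts a color-$c$ monochromatic copy of $F=G[U]$ inside $S$, and the two pieces fuse into a monochromatic $G$. The tower-height accounting yields $r(G;q)\le 2^{O(q^\ell\log q)}$.

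\textbf{Part (c).} The upper bound is the classical Erd\H{o}s--Rado recursion $r_3(n;q)\le 2^{2^{O(q\log q)}}$ applied with $n=v(G)$. For the lower bound I use the Erd\H{o}s--Hajnal stepping-up construction: starting from a $q$-coloring of $K_{2^{q/2}}^{(2)}$ with no monochromatic $K_3$ (which exists since $r(K_3;q)>2^{q/2}$), stepping up produces a $q$-coloring of $K_{2^{2^{q/2}}}^{(3)}$. The core structural claim, and the main technical novelty, is that every monochromatic 3-graph appearing in this stepped-up coloring lies in $\calU$. The recursive definition of $\calU_\ell$ is engineered precisely to match this: a 1-transversal (for $\calU_1$) captures the ``coordinate'' structure produced by a single round of stepping up, and reducibility via collapsible sets (for $\calU_\ell$ with $\ell\ge 2$) captures how nested stepping-ups collapse entire coordinate blocks into single vertices. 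Once this structural lemma is established, $G\notin\calU$ implies $G$ cannot occur monochromatically, giving $r(G;q)\ge 2^{2^{q/2}}$. The main obstacle, which drives both the (b) upper bound and the (c) lower bound, is establishing this tight equivalence between the iterated collapsibility hierarchy $\calU$ and the family of monochromatic substructures forced (or avoided) by stepping-up; the inductive embedding in (b) is essentially the algorithmic dual of the avoidance analysis in (c).
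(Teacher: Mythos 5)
Your Part~(a) is fine: the upper bound matches the paper's use of Erd\H{o}s's extremal theorem for tripartite $3$-graphs, and your first-moment lower bound (expected monochromatic copies $\le N^{v(G)}q^{1-e(G)}=o(1)$ for small polynomial $N$, using $e(G)\ge 2$) is a valid, if different, route to $q^{\Omega(1)}$; the paper instead colors $K^{(3)}_{\Omega(q^{1/3})}$ rainbow.

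Your Part~(b) lower bound construction is wrong. Coloring a triple $\{u,v,w\}\subset\{0,1\}^q$ by the first coordinate on which $u,v,w$ are not all equal does \emph{not} give tripartite color classes, unlike the bipartite-graph analogue. The reason the graph version works is that two binary values can be split into two classes by the distinguishing coordinate; but for three binary values that are ``not all equal,'' two of them necessarily coincide in that coordinate, so the coordinate-partition places two vertices of the edge in the same part. Concretely, on $\{00,01,10,11\}\subset\{0,1\}^2$, all four triples get color $1$ and together form $K_4^{(3)}$, which is not tripartite; hence color class $1$ is not tripartite, and the construction already fails to avoid $K_4^{(3)}$. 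The paper (following Axenovich, Gy\'arf\'as, Liu, and Mubayi) instead uses a probabilistic construction: take $q$ independent random copies of a complete balanced tripartite $3$-graph on $N=2^{\Theta(q)}$ vertices and color each triple by the index of the first copy containing it; a union bound shows all triples are covered, and each color class is tripartite by fiat. You would need to replace your deterministic attempt by something of this kind.

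For the Part~(b) upper bound, your outline is in the right spirit (supersaturate on copies of the reduced graph $H$, pigeonhole onto a common anchor and color, recurse inside the set of potential images of $v^*$), but the recursion you describe is imprecise in a way that matters: inside the set $S$ you cannot hope to force a color-$c$ copy of $F$ alone, since $S$ might carry no color-$c$ edges at all. The correct inductive quantity is the \emph{off-diagonal} Ramsey number $r(G,\dots,G,F,G,\dots,G)$ with $F$ substituted in position $c$; one then finds inside $S$ either a color-$c$ copy of $F$ (which fuses with the anchor into a copy of $G$) or a copy of $G$ in some other color (done directly). The paper's Lemma~\ref{lem:supersaturation-reduction} and Lemma~\ref{lem:explicit-upper-bound} set up exactly this off-diagonal induction, with the total vertex count $\sum_i v(G_i)$ as the decreasing parameter. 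Your ``inner IH on $v(G)$'' as stated doesn't account for the other $q-1$ colors.

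For Part~(c), the upper bound via Erd\H{o}s--Rado is correct. But the lower bound is where the real work lies, and you have not supplied it. You assert that ``every monochromatic $3$-graph appearing in this stepped-up coloring lies in $\calU$'' and that the definition of $\calU$ is ``engineered precisely to match,'' but this is precisely the paper's Lemma~\ref{lem:lb-double-exp}, which requires a careful induction on $|V(G)|$ analyzing the bit structure of $\delta_i=\delta(x_i,x_{i+1})$: a case split on whether the highest nonconstant bit $m$ of the $\delta_i$'s equals the first component $t$ of the color, and, when $m>t$, a decomposition of the index set into intervals $I_1,\dots,I_{p+1}$ between consecutive indices with $\mathrm{bit}(\delta_i,m)=1$, showing that $G$ decomposes into $(H;F_1,\dots,F_{p+1})$ with each piece monochromatic and hence in $\calU$. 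Moreover, the paper's coloring is the specific $\phi_q(x,y,z)=(\delta(\delta(x,y),\delta(y,z)),\mathbbm{1}\{\delta(x,y)>\delta(y,z)\})$, a ``nested $\delta$'' coloring, not stepping up from an arbitrary triangle-free $q$-coloring of $K^{(2)}_{2^{q/2}}$; the analysis leans on the second layer of $\delta$ in a way that would not transfer to a generic triangle-free base coloring. Asserting the conclusion of the structural lemma is not a proof of it, and this lemma is the core of the theorem.
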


    Our characterization might seem a bit unwieldy at first, but it turns out to be convenient to work with. For example, using it we can show that most Steiner triple systems have double-exponential multicolor Ramsey numbers, but there are Steiner triple systems for which it is exponential.

    The rest of the paper is structured as follows. In the remainder of the introduction, we give some examples which might help understand the definition of the sets $\calU_i.$ We prove Theorem~\ref{thm:3-graph} in Section~\ref{sec:proofs} which is split into three subsections. In the first subsection we prove the upper bounds, starting with a sketch of the main ideas, in the second we prove the lower bounds and in the third we tie all the bounds together. In Section~\ref{sec:examples}, we provide examples of $3$-graphs exhibiting different behaviours of the multicolor Ramsey number. We finish with some concluding remarks in Section~\ref{sec:concluding}.

    We use standard notation throughout the paper. As it appears frequently in our proofs, we denote by $\Star^{(3)}(h)$ the $3$-graph on $h$ vertices with the edges being all triples containing a fixed vertex.

    \subsection{About the sets $\calU_i$}
    In this subsection, we briefly discuss the sets $\calU_i$ just defined. Observe first that for all $i \ge 0,$ the set $\calU_i$ is closed under taking subgraphs. The rest of the content of this subsection is not needed for any of our proofs, but it should help clarify the definitions and facilitate understanding the rest of the paper.
        
    First let us show that $\calU_i \setminus \calU_{i-1} \neq \emptyset$ for all $i \ge 1.$ It is easy to see that $\Star^{(3)}(4) \in \calU_1 \setminus \calU_0.$ Now, let $i \ge 1$ and suppose there is some $G_i \in \calU_i \setminus \calU_{i-1}$. We define $G_{i+1}$ as follows. The vertex set of $G_{i+1}$ is $\{x\} \cupdot A \cupdot B$ where $|A| = |B| = |V(G_i)|.$ Inside each of $A$ and  $B$ place a copy of $G_i$ and additionally let $G_{i+1}$ contain all $3$-edges of the form $\{x, a, b\}, a \in A, b \in B.$ See Figure~\ref{fig:Giplusone} for an illustration. First, let us show that $G_{i+1} \in \calU_{i+1}.$ Indeed, by collapsing $A,$ $G_{i+1}$ is reducible to $(H, G_i),$ where we shall describe $H$ shortly. Since $G_i \in \calU_i,$ to show that $G_{i+1} \in \calU_{i+1},$ it suffices to show that $H \in \calU_i$ as well. Indeed, $V(H) = \{ x, a\} \cup B$ where $a$ represents the collapsed set $A.$ Note that $H[B] \cong G_i$ and the remaining edges of $H$ are of the form $\{x, a, b\}$ with $b \in B.$ Hence, the set $B$ is collapsible in $H$ and thus $H$ is reducible to $(e, G_i),$ where $e$ denotes the $3$-graph consisting of a single edge. Since $e \in \calU_0$ and $G_i \in \calU_i,$ it follows that $H \in \calU_i,$ as claimed.
    
    Now, we show that $G_{i+1} \not \in \calU_i.$ We claim that any collapsible set in $G_{i+1}$ intersects at most one of $A, B.$ Indeed, suppose that $U$ is collapsible in $G_{i+1}$ and contains a vertex $a \in A$ and a vertex $b \in B.$ Since $\{x, a, b\} \in E(G_{i+1}),$ we must have $x \in U.$ However, since $\{x, a, b'\} \in E(G_{i+1})$ for any $b' \in B,$ it follows that $B \subseteq U$ and analogously $A \subseteq U,$ so $U = V(G_{i+1}),$ a contradiction. Now, suppose that $G_{i+1}$ is reducible to $(H, G[U])$ by collapsing some set $U.$ Without loss of generality, $U \cap B = \emptyset,$ so $G_i \cong G_{i+1}[B] \subseteq H,$ implying that $H \in \calU_i.$ As $U$ was arbitrary, we have that $G_{i+1} \not\in \calU_i.$

    On the other hand, for example the clique $K^{(3)}_4$ is not in $\calU.$ Indeed, it has no collapsible set and no set of vertices such that each edge contains precisely one vertex from this set. A slightly more complicated example is the $3$-graph $G$ depicted in Figure~\ref{fig:example}. Formally, we have $V(G) = \{ a, b, c, d, e\}$ and $E(G) = \{ acd, bcd, ace, bde, cde \}.$ It can be checked that $G \not\in \calU_1$ and that $\{a, b\}$ is the only collapsible set in $G$. However, the $3$-graph obtained by collapsing $\{a, b\}$ is isomorphic to $K_4^{(3)},$ so $G \not\in \calU.$

    \begin{figure}
        \centering
        \begin{minipage}{0.4\textwidth}
            \centering
            \includegraphics[height=5cm]{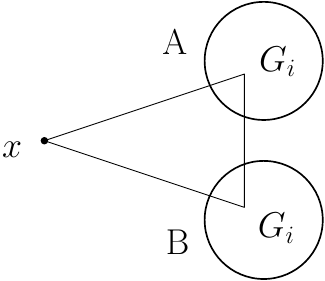} 
            \caption{Construction of $G_{i+1}$}
            \label{fig:Giplusone}
        \end{minipage}\hfill
        \begin{minipage}{0.5\textwidth}
            \centering
            \includegraphics[height=5cm]{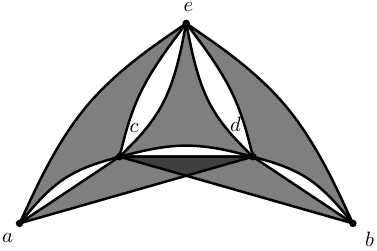}
            \caption{A $3$-graph not in $\calU$}
            \label{fig:example}
        \end{minipage}
    \end{figure}
        
    \section{Proof of Theorem~\ref{thm:3-graph}} \label{sec:proofs}
    \subsection{Upper bounds}
    \textbf{Proof sketch}\\
    This aim of this subsection is to prove the single-exponential upper bound in Theorem~\ref{thm:3-graph}~\ref{thm:3-graph-single}. Before presenting the proof formally, we illustrate our ideas on a simple example where $G$ is the Fano plane, that is, the unique $3$-graph on $7$ vertices with $7$ edges which all pairwise intersect in exactly one vertex.

    Let $U \subseteq V(G)$ denote the vertex set of an arbitrary edge in $G$. Note that by the abovementioned properties of the Fano plane, every edge intersects $U$ in either one or three vertices. Therefore, $U$ is collapsible and $G$ is reducible to the pair $(H, F)$ where $H = \Star^{(3)}(5),$ i.e. a $4$-clique in the link of a vertex, and $F$ is a single edge. Trivially, $F, H \in \calU_1$ which shows that $G \in \calU_2$. Though not required for the upper bound, it is easy to see that $G \not\in \calU_1.$

    Suppose we are given a $q$-colored complete $3$-graph $\Gamma$ on $N$ vertices, where $N$ is of the form $2^{O(q^2 \log q)}$ and we wish to show that there exists a monochromatic copy of $G.$ By considering all triples through a fixed vertex it is easy to see that $r(H; q) \le 1 + r(K^{(2)}_4; q) \le q^{4q}$ using the classical bound of Erd\H{o}s and Szekeres. Let $R = r(H; q).$ By definition, every set of $R$ vertices contains a monochromatic copy of $H,$ hence in $\Gamma$ there are at least $\binom{N}{R} / \binom{N-5}{R-5} \ge \frac{N^5}{R^5}$ monochromatic copies of $H$. By the pigeonhole principle, there is a set $S \subseteq V(\Gamma),$ $|S| = 4,$ and a colour, say red, such that there are at least $N / (q R^5)$ red copies of $H = \Star^{(3)}(5)$ with the set $S$ playing the role of the $4$-clique. Let $V'$ denote the set of vertices playing the role of the center of the star in these copies, so $|V'| \ge N / (q R^5).$

    Crucially, observe that if there is a red edge inside the set $V',$ then these three vertices along with the set $S$ contain a monochromatic copy of $G$ that we aim to find. Therefore, $V'$ is colored by $q-1$ colors. Iterating this argument inside $V'$, we see that it suffices to take $N \ge 3(q R^5)^q = 2^{O(q^2 \log q)},$ as claimed.

    For general $G,$ the argument is a little more complicated. Suppose that $G \in \calU_{\ell}$ and it is reducible to $(H, F)$ for some $H \in \calU_{\ell-1}, F \in \calU_{\ell}$ and let $U \subseteq V(G)$ be the collapsible set witnessing this. By a supersaturation argument analogous to the one above, we find a large set $V' \subseteq V(\Gamma)$ of vertices that can play the role of $v^* \in V(H)$ with the same set $S$ in the same color, say red. Then, however, inside the set $V'$, we obtain that there is no red copy of $F.$ In the case where $G$ is the Fano plane, $F$ is a single edge, which makes the argument simpler since we only need to ensure that $|V'| \ge r(G; q-1)$. In general, we shall require that $|V'|$ is at least the off-diagonal Ramsey number $r(F, G, G, \dots, G),$ where $G$ appears $q-1$ times.    

    We proceed with the formal proof. We start with the supersaturation argument outlined above, which allows us to reduce the target hypergraph in one of the colors.
    
    \begin{lemma} \label{lem:supersaturation-reduction}
        Let $G_1, \dots, G_q$ be given $3$-graphs. For $i \in [q],$ let $(H_i, F_i)$ be an arbitrary pair to which $G_i$ is reducible and if no such pair exists, let $H_i = G_i.$ Denoting $h = \max_{i \in [q]} v(H_i),$ we have
        \[ r(G_1, \dots, G_q) \le r(H_1, \dots, H_q)^h \cdot q \cdot \max\left\{ \{1\} \cup \{ r(G_1, \dots, G_{i-1}, F_i, G_{i+1}, \dots, G_q) \, \vert \, G_i \text{ is reducible} \} \right\}. \]
    \end{lemma}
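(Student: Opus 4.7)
The plan is to combine a supersaturation step (double-counting followed by pigeonhole) with a single appeal to an off-diagonal Ramsey number. Set $R := r(H_1, \dots, H_q)$, let $M$ denote the maximum appearing in the stated bound, and fix $N$ of order $R^h \cdot q \cdot M$. Given any $q$-coloring of the edges of $K_N^{(3)}$ on vertex set $V(\Gamma)$, the goal is to exhibit a monochromatic copy of some $G_i$ in color $i$.

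Since $R = r(H_1, \dots, H_q)$, every $R$-subset of $V(\Gamma)$ contains a monochromatic $H_i$-copy in some color $i$. Double-counting incidences between $R$-subsets and such copies yields $\sum_i X_i \cdot \binom{N - v(H_i)}{R - v(H_i)} \ge \binom{N}{R}$, where $X_i$ denotes the number of labeled monochromatic embeddings of $H_i$ into $\Gamma$ in color $i$. Pigeonhole over the $q$ colors produces an index $i^*$ with $X_{i^*} \ge \binom{N}{v(H_{i^*})}/(q\binom{R}{v(H_{i^*})})$. I would then view each such labeled embedding as a pair $(T, v)$, where $T \in V(\Gamma)^{v(H_{i^*})-1}$ is the ordered tuple of images of the vertices of $V(H_{i^*}) \setminus \{v^*\}$ under some fixed linear order on these vertices and $v$ is the image of $v^*$. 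A second pigeonhole over the at most $N^{v(H_{i^*})-1}$ possibilities for $T$ produces a fixed tuple $T$ for which the set $V' \subseteq V(\Gamma)$ of those $v$ making $(T,v)$ a monochromatic $H_{i^*}$-embedding in color $i^*$ has size at least (up to constants) $N/(qR^h)$, which for our choice of $N$ is at least $M$.

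If $G_{i^*}$ is not reducible then $H_{i^*} = G_{i^*}$, and the monochromatic $H_{i^*}$-copy defined by $(T,v)$ for any $v \in V'$ already supplies the desired monochromatic $G_{i^*}$. Otherwise $G_{i^*}$ is reducible to $(H_{i^*}, F_{i^*})$ by collapsing some set $U \subseteq V(G_{i^*})$, with $F_{i^*} = G_{i^*}[U]$. By the choice of $N$ we have $|V'| \ge M \ge r(G_1, \dots, F_{i^*}, \dots, G_q)$, so restricting the given coloring to $V'$ produces either a monochromatic $G_j$ in some color $j \ne i^*$ (and we are done) or a monochromatic copy of $F_{i^*}$ in color $i^*$ on some vertex set $U' \subseteq V'$. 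In the latter case I claim that the vertex set of $T$ together with $U'$ supports a monochromatic copy of $G_{i^*}$ in color $i^*$: combining the bijection $U \leftrightarrow U'$ given by the $F_{i^*}$-copy with the fixed bijection $V(G_{i^*}) \setminus U = V(H_{i^*}) \setminus \{v^*\} \leftrightarrow T$, one checks that edges of $G_{i^*}$ avoiding $U$ embed into $T$ and are color $i^*$ since they are edges of $H_{i^*}$; edges contained in $U$ embed into $U'$ and are color $i^*$ as edges of the $F_{i^*}$-copy; edges meeting $U$ in exactly one vertex $u$ correspond to edges $xyv^*$ of $H_{i^*}$, and these embed in color $i^*$ by the defining property of $V'$ applied with $v$ equal to the image of $u$ in $U'$; and edges meeting $U$ in exactly two vertices cannot occur because $U$ is collapsible. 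The main obstacles will be keeping track of constants in the counting step to justify $|V'| \ge M$ and the careful case-analysis in this last embedding step, where the collapsibility of $U$ is what makes the reassembly go through.
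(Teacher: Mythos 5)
Your proof is correct and takes essentially the same approach as the paper: a double-counting/supersaturation step over $R$-subsets, a pigeonhole on a color together with an ordered tuple $T$ to produce a large set $V'$ of candidate images for $v^*$, a single appeal to the off-diagonal Ramsey number inside $V'$ to find either some $G_j$ ($j\neq i^*$) or a monochromatic $F_{i^*}$, and a reassembly of the $G_{i^*}$-copy using collapsibility (with the same four-way case analysis on $|e\cap U|$). Two small remarks: the paper pads each $H_i$ with isolated vertices so that all have exactly $h$ vertices, which lets the supersaturation and pigeonhole use the single parameter $h$; and your worry about constants is unfounded, since $\binom{N}{k}/\binom{R}{k}\ge (N/R)^k$ holds termwise, giving $|V'|\ge N/(qR^k)\ge N/(qR^h)=M$ exactly with no slack needed.
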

    \begin{proof}
        For convenience, to each graph $H_i$ we add isolated vertices so that it has $h$ vertices which clearly does not change the value of $r(H_1, \dots, H_q).$ Denote $R \coloneqq r(H_1, \dots, H_q)$ and $N = r(H_1, \dots, H_q)^h \cdot q \cdot \max\left\{ \{1\} \cup \{ r(G_1, \dots, G_{i-1}, F_i, G_{i+1}, \dots, G_q) \, \vert \, G_i \text{ is reducible} \} \right\}$ and consider an arbitrary $q$-coloring of $K^{(3)}_N.$ Let $\Gamma$ denote this $q$-colored $3$-graph.
        
        By definition, any set of $R$ vertices of $\Gamma$ contains a copy of $H_i$ in color $i$ for some $i \in [q].$ Any such copy is contained in $\binom{N-h}{R-h}$ sets of $R$ vertices, so in total there are at least
        \[ \binom{N}{R} \big/ \binom{N-h}{R-h} \ge \frac{N^h}{R^h} \]
        distinct $h$-sets of $V(\Gamma)$ each of which is a monochromatic copy of $H_i$ in color $i$ for some $i \in [q].$ For such a copy, let $v^*, v_1, \dots, v_{h-1}$ denote its vertices with $v^*$ playing the role of the special vertex as in Definition~\ref{def:reducible} or an arbitrary vertex of $H_i$ if $H_i = G_i$. By the pigeonhole principle, there is a color $c \in [q]$ and an $(h-1)$-tuple of vertices $S = (w_1, \dots, w_{h-1})$ for which there are at least 
        \[ \frac{N^h}{R^h} / (q N^{h-1}) = \frac{N}{q R^h} \]
        copies of $H_c$ in color $c$ with $w_1, \dots, w_{h-1},$ in this order, playing the role of all vertices in of $H_i$ except $v^*$. If $H_c = G_c,$ we are done. Otherwise, let $V' \subseteq V(\Gamma)$ denote the set of vertices playing the role of $v^*$ in these copies, so $|V'| \ge \frac{N}{q R^h}.$
        
        Crucially, we claim that if there is a copy of $F_c$ in color $c$ inside $\Gamma[V'],$ this yields the desired copy of $G_c$ in color $c$ in $\Gamma$. Indeed, suppose there is such a copy in $V'$ and let $T \subseteq V'$ denote its vertex set. Let $U_c \subseteq V(G_c)$ be the collapsible set such that $G_c$ is reducible to $(H_c, F_c)$ by collapsing $U_c$ and let $V(G_c) \setminus U_c = \{x_1, \dots, x_m\}$. So by definition, $V(H_c) = \{v^*, x_1, \dots, x_m\}.$ Without loss of generality, we have for any $v \in V',$ the vertices $\{v, w_1, \dots, w_m\}$ form a copy of $H_c$ where $v$ is mapped to $v^*$ and $w_i$ is mapped to $x_i$ for every $i \in [m].$
        
        Then, $T \cup \{w_1, \dots, w_m\}$ forms a red copy of $G_c$ with $T$ being mapped to $U$ and $w_i$ being mapped to $x_i$ for $i \in [m].$ To see this, note first that by assumption, $T$ contains a red copy of $F_c = G_c[U].$ Furthermore, any edge $e = x_ix_jx_k \in E(G_c)$ disjoint from $U_c$ is contained in $H_c$ and since the vertices $v, w_1, \dots, w_m,$ for an arbitrary $v \in V',$ form a red copy of $H_c,$ it follows that the edge $w_iw_jw_k$ is red in $\Gamma$ as needed. Finally, by definition of a collapsible set, any other edge $e \in E(G_c)$ intersects $U_c$ in exactly one vertex. Consider such an edge $e = ux_ix_j$ with $u \in U_c.$ Then, we have $v^*x_ix_j \in E(H_c).$ Recall that $u \in V(F_c)$ so in the assumed red copy of $F_c,$ it is mapped to some vertex $v \in T \subseteq V'$. Since $v \in V',$ the vertices $v, w_1, \dots, w_m$ form a red copy of $H_c$ with $v$ mapped to $v^*$ and $w_i$ mapped to $x_i$ for $i \in [m].$ In particular, this implies that the edge $vw_iw_j$ is red in $\Gamma,$ as required.
        
        By our choice of $N,$ we have $|V'| \ge r(G_1, \dots, G_{c-1}, F_c, G_{c+1}, \dots G_q)$ so on $V'$ we either find a copy of $G_i$ in color $i$ for $i \in [q] \setminus \{c\}$ or a copy of $F_c$ in color $c,$ thus finishing the proof.
    \end{proof}
    
    To prove the upper bound in Part~\ref{thm:3-graph-single} of Theorem~\ref{thm:3-graph}, we use the preceding lemma and apply induction.
    
    \begin{lemma} \label{lem:explicit-upper-bound}
        Let $\ell \ge 1$ and let $G_1, \dots, G_q \in \calU_\ell$ be $3$-graphs each on at most $h$ vertices and denote $t = \sum_{i=1}^q v(G_i).$ Then, 
        \[ r(G_1, \dots, G_q) \le (qh)^{q^{\ell-1} \cdot h^{2\ell} t}. \]
    \end{lemma}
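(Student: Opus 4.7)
I plan to use a nested induction: outer induction on $\ell$ and, for each fixed $\ell\ge 2$, inner induction on $t=\sum_i v(G_i)$. The driver throughout is Lemma~\ref{lem:supersaturation-reduction}, which for any chosen reductions $(H_i,F_i)$ of the $G_i$'s yields
\[
r(G_1,\dots,G_q)\le r(H_1,\dots,H_q)^{h}\cdot q\cdot \max_{i}r(G_1,\dots,F_i,\dots,G_q).
\]

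For the base case $\ell=1$, each $G_i\in\calU_1$ admits a set $S_i\subseteq V(G_i)$ hitting every edge in exactly one vertex. I will apply Lemma~\ref{lem:supersaturation-reduction} taking $(H_i,F_i)$ to be the reduction obtained by collapsing $S_i$ when $|S_i|\ge 2$, and $H_i=G_i$ otherwise. In both cases $H_i$ is a subgraph of $\Star^{(3)}(v(H_i))$, and each $F_i$ (when present) is edgeless, since $S_i$ is an independent set in $G_i$. A classical link-of-a-vertex argument, together with the off-diagonal graph Ramsey bound $r(K_{n_1},\dots,K_{n_q})\le q^{\sum n_i}$, then gives $r(H_1,\dots,H_q)\le q^t$, and the edgeless $F_i$'s satisfy $r(G_1,\dots,F_i,\dots,G_q)\le v(F_i)\le h$ trivially. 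Substituting into Lemma~\ref{lem:supersaturation-reduction} yields $r(G_1,\dots,G_q)\le (q^t)^h\cdot q\cdot h\le (qh)^{h^2 t}$, with room to spare.

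For the inductive step with $\ell\ge 2$, if none of the $G_i$ is reducible then each $G_i$ already lies in $\calU_{\ell-1}$ (non-reducible graphs of $\calU_\ell$ are forced into $\calU_{\ell-1}$ by definition), and the outer hypothesis gives the bound directly. Otherwise, for each $i$ I will select a pair $(H_i,F_i)$ with $H_i\in\calU_{\ell-1}$: for $G_i\in\calU_\ell\setminus\calU_{\ell-1}$ such a pair is supplied by the definition of $\calU_\ell$; for reducible $G_i$ already in $\calU_{\ell-1}$, unravelling the recursive definition of $\calU$ (eventually collapsing $S_i$ in a $\calU_1$-graph, or a colour class in a tripartite $\calU_0$-graph) produces one. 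The outer hypothesis then bounds $r(H_1,\dots,H_q)\le (qh)^{q^{\ell-2}h^{2\ell-2}t_H}$ with $t_H=\sum v(H_i)$, while the inner hypothesis bounds each off-diagonal term by $(qh)^{q^{\ell-1}h^{2\ell}(t-1)}$ since the vertex total there is at most $t-1$. Crucially, because at least one $G_i$ is reducible and collapsing a set of size $\ge 2$ strictly decreases the vertex count, we have $t_H\le t-1$. Using this, $t\le qh$, and Lemma~\ref{lem:supersaturation-reduction}, the desired inequality reduces to the elementary check $q^{\ell-2}h^{2\ell-1}(t-1)+1\le q^{\ell-1}h^{2\ell}$.

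The hardest part is this exponent bookkeeping: the $h$-th power of $r(H_1,\dots,H_q)$ in Lemma~\ref{lem:supersaturation-reduction} multiplies the outer contribution by $h$, and the target exponent $q^{\ell-1}h^{2\ell}t$ has been chosen so that the single-unit saving $t_H\le t-1$ just barely absorbs this loss, with the inequality becoming essentially tight when $t=qh$. A smaller additional point to verify is the existence of a reduction with $H_i\in\calU_{\ell-1}$ whenever $G_i$ is reducible but already in $\calU_{\ell-1}$; this is handled entirely by the recursive structure of the $\calU_j$'s.
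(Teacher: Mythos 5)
Your proposal is correct and follows essentially the same route as the paper's own proof: the same base case for $\ell=1$ (collapse the transversal $S_i$, bound $r(H_1,\dots,H_q)$ via stars and the off-diagonal graph Ramsey bound, use that the $F_i$'s are edgeless), the same inductive step driven by Lemma~\ref{lem:supersaturation-reduction} with induction on $\ell$ and on $t$, and the same exponent bookkeeping reducing to $q^{\ell-2}h^{2\ell-1}(t-1)+1\le q^{\ell-1}h^{2\ell}$ via $t\le qh$. The one place you gloss over (choosing a reduction $(H_i,F_i)$ with $H_i\in\calU_{\ell-1}$ when $G_i$ is reducible but already lies in $\calU_{\ell-1}$) is glossed over identically in the paper, and your parenthetical justification is adequate.
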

    \begin{proof}
        We prove the lemma by induction on $\ell, h, t.$ We assume $h \ge 3,$ otherwise there is nothing to prove. Consider first $\ell = 1$ and recall that by definition, each of the graphs $G_i$ has a subset of vertices $U_i$ intersecting every edge in precisely one vertex. For every $i$ for which $|U_i| > 1,$ let $(H_i, F_i)$ denote the resulting pair of graphs obtained by collapsing $U_i.$ Note that $F_i$ is the empty graph on $|U_i|$ vertices and $H_i$ is a subgraph of $\Star^{(3)}(v(G_i) - |U_i| + 1).$ If $|U_i| = 1,$ then let $H_i = G_i$ which is again a subset of a star $\Star^{(3)}(v(G_i) - |U_i| + 1)$. Consider a $q$-colored $3$-uniform clique. In order to find a copy of $\Star^{(3)}(s_i)$ in color $i$ for some $i,$ we can fix an arbitrary vertex $v$ and then in its link find a graph clique of size $s_i$ in color $i$. Thus, we can use a classical result in graph Ramsey theory, $r_2(n_1, \dots, n_q) < q^{\sum_{i=1}^q n_i},$ to obtain that $r(H_1, \dots, H_q) \le q^{\sum_{i=1}^q v(G_i) - |U_i| + 1}.$ Note that if $|U_i| > 1,$ then $r(G_1, \dots, G_{i-1}, F_i, G_{i+1}, G_r) \le v(F_i) \le h$ since $F_i$ has no edges. Applying Lemma~\ref{lem:supersaturation-reduction}, we obtain 
        \[ r(G_1, \dots, G_q) \le  (q^{\sum_{i=1}^q v(G_i) - |U_i| + 1})^h \cdot q \cdot h \le q^{ht+1} h \le (qh)^{h^2 t}, \]
        where in the last inequality we used $h \ge 3.$
        
        Now, let $\ell > 1$ and assume we have proved the statement for all sequences of graphs in $\calU_{\ell-1}$ as well as all sequences of graphs in $\calU_\ell$ each on at most $h$ vertices with in total at most $t-1$ vertices. Clearly, we may assume that $G_1 \in \calU_\ell \setminus \calU_{\ell-1}.$ For each $i \in [q]$ such that $G_i$ is reducible, let $(H_i, F_i)$ be a pair with $H_i \in \calU_{\ell-1}, F_i \in \calU_\ell$ to which $G_i$ is reducible and recall that $v(H_i), v(F_i) < v(G_i).$ For each $i$ such that $G_i$ is not reducible, let $H_i = G_i.$ Applying Lemma~\ref{lem:supersaturation-reduction} and the induction hypothesis, we have
        
        \begin{align*}
              r(G_1, \dots, G_q) &\le r(H_1, \dots, H_q)^h \cdot q \cdot \max\left\{ \{1\} \cup \{ r(G_1, \dots, G_{i-1}, F_i, G_{i+1}, \dots, G_q) \, \vert \, G_i \text{ is reducible} \} \right\} \\
                &\le \left((qh)^{q^{\ell-2} h^{2\ell-2} (t-1)}\right)^h \cdot q \cdot (qh)^{q^{\ell-1} h^{2\ell} (t-1)} \le (qh)^{q^{\ell-1} h^{2\ell} \cdot (\frac{t-1}{qh} + \frac{1}{qh} + t-1)} \le (qh)^{q^{\ell-1} h^{2\ell} \cdot t},
        \end{align*}
        where in the last inequality we used that $t \le qh.$
    \end{proof}

    Applying Lemma~\ref{lem:explicit-upper-bound} with $G_1 = \dots = G_q = G$ we obtain the upper bound claimed in Theorem~\ref{thm:3-graph},~Part~\ref{thm:3-graph-single}.
    \begin{corollary}\label{cor:upper-bound-Ui}
        If $G \in \calU_\ell,$ then $r(G;q) \le 2^{O(q^\ell \log q)}.$        
    \end{corollary}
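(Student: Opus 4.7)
The plan is to specialize Lemma~\ref{lem:explicit-upper-bound} to the diagonal case where all $q$ target hypergraphs coincide with $G$. Set $h = v(G)$, which is a constant once $G$ is fixed, and observe that with $G_1 = \cdots = G_q = G$ the quantity $t = \sum_{i=1}^q v(G_i)$ becomes exactly $qh$. Substituting these values into the bound supplied by Lemma~\ref{lem:explicit-upper-bound} yields, for $\ell \ge 1$,
\[
r(G;q) \;\le\; (qh)^{q^{\ell-1}\, h^{2\ell}\, \cdot\, qh} \;=\; (qh)^{q^{\ell}\, h^{2\ell+1}}.
\]

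Taking base-$2$ logarithms and using that $h$ is an absolute constant depending only on $G$, this simplifies to
\[
\log_2 r(G;q) \;\le\; q^{\ell}\, h^{2\ell+1}\, \log_2(qh) \;=\; O(q^{\ell}\log q),
\]
which is exactly the stated bound. For the boundary case $\ell = 0$, the hypothesis $G \in \calU_0$ means $G$ is tripartite, so $r(G;q) = q^{O(1)} = 2^{O(\log q)}$ follows from part~\ref{thm:tripartite} of Theorem~\ref{thm:3-graph} (a standard K\H{o}v\'ari--S\'os--Tur\'an-type argument), which already comfortably implies the $2^{O(q^0 \log q)}$ claim.

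There is essentially no obstacle at this stage: all the real work has been done inside Lemmas~\ref{lem:supersaturation-reduction} and~\ref{lem:explicit-upper-bound}, whose induction on the triple $(\ell, h, t)$ was carefully calibrated so that the diagonal specialization produces the single-exponential bound with the correct polynomial exponent $q^\ell$. The only point that warrants attention is to verify that the exponent arithmetic in the substitution $t = qh$ indeed turns $q^{\ell-1}\cdot t$ into $q^{\ell}$ (absorbing the factor of $h$ into the constant), so that the final bound on $\log r(G;q)$ comes out to $O(q^{\ell} \log q)$ rather than $O(q^{\ell+1} \log q)$ or similar; this matches the lower bounds of part~\ref{thm:double-exp} in the tower-height sense, and confirms that the characterization via the nested family $\{\calU_\ell\}$ is sharp up to the polynomial factor in the exponent.
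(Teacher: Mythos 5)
Your proof is correct and follows exactly the same route as the paper: the paper simply remarks that Corollary~\ref{cor:upper-bound-Ui} follows by applying Lemma~\ref{lem:explicit-upper-bound} with $G_1 = \dots = G_q = G$, and you carry out the substitution $t = qh$ and the resulting exponent arithmetic explicitly, along with the easy $\ell = 0$ case. No issues.
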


    \subsection{Lower bounds}
    \begin{defn} \label{def:decomposable}
      Let $G$ be a $3$-uniform hypergraph. Suppose there is a partition of its vertex set $V(G) = V_1 \cupdot V_2 \dots \cupdot V_t$ with $|V_1|, t \ge 2$ such that for any edge $e \in E(G),$ and any $i \in [t],$ we have $|e \cap V_i| \neq 2.$ For $i \in [t],$ let $F_i \coloneqq G[V_i]$ and let $H$ be the $3$-uniform hypergraph obtained by collapsing each of the sets $V_i$ into a single vertex. Formally, $V(H) = [t]$ and $E(H) = \{ xyz \, \vert \, \exists e \in E(G), |e \cap V_x| = |e \cap V_y| = |e \cap V_z| = 1\}.$ We say that $G$ can be \emph{decomposed} into $(H; F_1, \dots, F_t).$ 
    \end{defn}

    In our proofs of the lower bounds, Definition~\ref{def:decomposable} will play a similar role that Definition~\ref{def:reducible} played in the proofs of the upper bounds. By taking $V_1 = U$ and $|V_2| = \dots = |V_t| = 1,$ informally speaking, we recover the definition of reducibility. On the other hand, a reduction with $t$ parts can, in some sense, be viewed as a sequence of at most $t$ simple reductions. Formally, we have the following lemma.

    \begin{lemma} \label{lem:same}
        If $G$ can be decomposed into $(H; F_1, \dots, F_t),$ where $H, F_1, \dots, F_t \in \calU,$ then $G \in \calU$.
    \end{lemma}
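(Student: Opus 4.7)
The plan is to induct on the parameter $s \ge 1$ defined as the number of parts $V_j$ in the decomposition satisfying $|V_j| \ge 2$; since Definition~\ref{def:decomposable} forces $|V_1| \ge 2$, we always have $s \ge 1$. The underlying idea is that a single decomposition into $(H; F_1, \dots, F_t)$ with many nontrivial parts can be realized as a sequence of simple collapses, one per nontrivial part, each of which individually falls under Definition~\ref{def:reducible} and therefore keeps us inside $\calU$.

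The first observation is that any part $V_j$ with $|V_j| \ge 2$ is a collapsible subset of $G$ in the sense of Definition~\ref{def:reducible}: no edge of $G$ meets $V_j$ in exactly two vertices (this is the defining property of the decomposition), and $2 \le |V_j| < |V(G)|$ (the strict inequality uses $t \ge 2$). Hence $G$ is reducible to $(G', F_j)$, where $G'$ is obtained from $G$ by collapsing $V_j$ into a new vertex $v^*$.

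For the base case $s = 1$, only $V_1$ is nontrivial and $V_2, \dots, V_t$ are singletons. Collapse $V_1$ to obtain $G'$. Because the remaining parts are singletons, the map sending $v^*$ to the vertex of $H$ coming from $V_1$ and each singleton $V_j$ to the corresponding vertex of $H$ is an isomorphism $G' \cong H$. Thus $G' \in \calU$ and $F_1 \in \calU$, so $G \in \calU$: concretely, if $G' \in \calU_a$ and $F_1 \in \calU_b$, then $G \in \calU_{\max(a+1,b)}$ by the definition of the sets $\calU_i$.

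For the inductive step $s \ge 2$, relabel so that $|V_1| \ge 2$ and collapse $V_1$ to obtain $G'$. The main subclaim, which is essentially the only content of the argument, is that $G'$ itself admits a decomposition as $(H;\, F_2, \dots, F_t,\, e_0)$ using the partition $V(G') = V_2 \cupdot \dots \cupdot V_t \cupdot \{v^*\}$, where $e_0$ is the single-vertex $3$-graph (with no edges). To verify this, take any edge $e \in E(G')$. If $v^* \notin e$, then $e \in E(G)$ is disjoint from $V_1$ and so meets each $V_j$ ($j \ge 2$) in $0$, $1$, or $3$ vertices. If $e = \{x, y, v^*\}$, then some $u \in V_1$ satisfies $\{x,y,u\} \in E(G)$, and the decomposition property applied to this original edge forces $x$ and $y$ to lie in two distinct parts $V_{j_1}, V_{j_2}$ with $j_1, j_2 \ne 1$. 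Moreover, collapsing every part of $G'$ (including the singleton $\{v^*\}$) recovers $H$ exactly. This new decomposition of $G'$ has $t \ge 2$ parts and $s - 1 \ge 1$ nontrivial ones; all of $H, F_2, \dots, F_t$ are in $\calU$ by assumption, and $e_0 \in \calU_0 \subseteq \calU$ trivially. After reordering so that a nontrivial part is listed first, the induction hypothesis yields $G' \in \calU$, and combined with $F_1 \in \calU$ and the reducibility of $G$ to $(G', F_1)$, we conclude $G \in \calU$.

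The main obstacle is the subclaim that $G'$ inherits a decomposition of the same form with one fewer nontrivial part; the content there is the case $v^* \in e$, where one must use the original no-edge-hits-in-exactly-two-vertices property to rule out $x, y$ lying in a common part. Everything else is routine bookkeeping.
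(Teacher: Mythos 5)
Your proof is correct and is essentially the same as the paper's: both arguments realize the decomposition as a sequence of single collapses of the nontrivial parts, relying on the key observation that collapsing one part leaves the others collapsible. The paper phrases this as an explicit chain $G_0, G_1, \dots, G_s \cong H$ followed by reverse induction, while you formulate it as induction on the number of nontrivial parts and spell out the case analysis for edges through $v^*$ that the paper leaves implicit; the content is identical.
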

    \begin{proof}
        Let $V(G) = V_1 \cupdot \dots \cupdot V_t$ be the partition exhibiting that $G$ can be decomposed into $(H; F_1, \dots, F_t).$ Without loss of generality, assume that $|V_i| \ge 2$ for $i \in [s]$ and $|V_i| = 1$ for $s+1 \le i \le t.$ Denote $G_0 = G$ and for $i = 1, \dots, s,$ let $G_i$ be obtained from $G_{i-1}$ by collapsing $V_i.$ Note that these collapses are valid since a set $V_i$ remains collapsible after collapsing a disjoint set $V_j, j < i.$ The final graph $G_s$ is isomorphic to $H$, hence $G_s \in \calU.$ By definition, for each $0 \le i \le s-1,$ $G_i$ is reducible to $(G_{i+1}, G[V_{i+1}]),$ where $G[V_{i+1}] = F_{i+1} \in \calU$. Hence, by reverse induction, it follows that $G_{s-1}, \dots, G_0 = G$ are also in $\calU,$ as claimed.
    \end{proof}

    Our lower bound constructions are based on the stepping-up approach of Erd\H{o}s and Hajnal. First, we recall an important function used in this construction. For a nonnegative integer $x,$ let $x = \sum_{i=0}^{\infty} a_i 2^i$ be its unique binary representation (where $a_i = 0$ for all but finitely many $i$). We denote $\bit(x, i) \coloneqq a_i.$ Then for distinct $x, y \in \mathbb{Z}_{\ge 0},$ we define $\delta(x, y) \coloneqq \max \{ i \in \mathbb{Z}_{\ge 0} \, \vert \, \bit(x, i) \neq \bit(y, i)\}.$ See Figure~\ref{fig:deltas} for an illustration.

    \begin{figure}[h]
        \centering
        \includegraphics[scale=0.9]{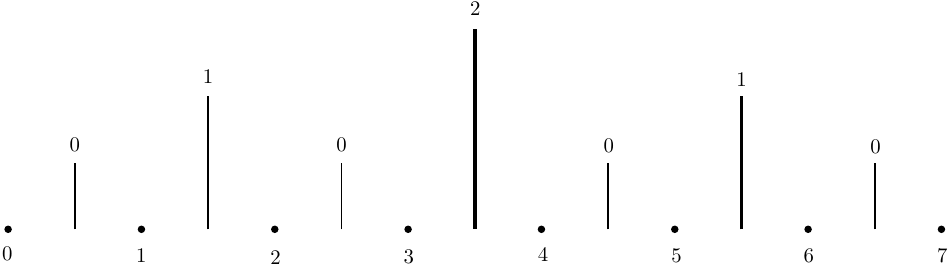}
        \caption{It is convenient to think about the function $\delta$ in the following way. The value of $\delta(x, y)$ is given by the highest line between $x$ and $y$ on the picture. So, for example, $\delta(0, 1) = \delta(6, 7) = 0,$ $\delta(0, 3) = \delta(5, 6) = 1, \, \delta(3, 4) = \delta(2, 7) = 2.$}
        \label{fig:deltas}
    \end{figure}

    The following properties of this function are well known and easy to verify.
    \begin{enumerate}[label=P\arabic*)]
        \item \label{prop:delta-smaller} $x < y \iff \bit(x, \delta(x, y)) < \bit(y, \delta(x, y))$.
        \item \label{prop:not-equal} For any $x < y < z,$ $\delta(x, y) \neq \delta(y, z)$.
        \item \label{prop:maximum} For any $x_1 < x_2 < \dots < x_k,$ $\delta(x_1, x_k) = \max_{1 \le i \le k-1} \delta(x_i, x_{i+1})$. 
    \end{enumerate}

    For every even $q$ we define a $q$-coloring $\phi_q$ of a complete $3$-uniform hypergraph on the vertex set $\{0, \dots, N_q-1\},$ where $N_q \coloneqq 2^{2^{q/2}}.$ For $0 \le x < y < z < N_q,$ let
    %and let $\delta_1 = \delta(x, y), \delta_2 = \delta(y, z).$ By~\ref{prop:not-equal}, we have $\delta_1 \neq \delta_2.$ Then, we set
    \[ \phi_q(x, y, z) \coloneqq  \left( \delta(\delta(x, y), \delta(y, z)), \mathbbm{1}\{ \delta(x,y) > \delta(y,z) \} \right). \]

    For example, $\delta(1, 4) = 2, \delta(4, 6) = 1$ and $\delta(2, 1) = 1,$ so $\phi_q(1, 4, 6) = (1, 1).$ 
    
    By~\ref{prop:not-equal}, we have $\delta(x, y) \neq \delta(y, z)$ so $\phi_q$ is well-defined. Additionally, note that $\delta(x,y), \delta(y,z) < 2^{q/2},$ implying $0 \le \delta(\delta(x,y), \delta(y,z)) \le q/2-1,$ so $\phi_q$ indeed uses at most $q$ colors.

    We are ready to prove our double-exponential lower bound.
    
    \begin{lemma} \label{lem:lb-double-exp}
        For any even $q,$ if $\phi_q$ contains a monochromatic copy of a $3$-graph $G,$ then $G \in \calU.$
    \end{lemma}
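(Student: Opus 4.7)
The plan is strong induction on $v(G)$. The base case $v(G) \le 2$ is immediate since $G$ has no edges, so $G \in \calU_0 \subseteq \calU$.

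For the inductive step, I would fix a monochromatic embedding $\sigma : V(G) \to X \subseteq \{0, \dots, N_q - 1\}$ in color $(d, b)$ and perform a top-level block decomposition of $X$. Let $M = \max_{x < y \in X} \delta(x, y)$. By \ref{prop:maximum} and \ref{prop:delta-smaller}, this maximum is attained at a unique consecutive pair in sorted $X$, so $X = B_0 \cupdot B_1$ splits (separated by bit $M$) with both sides nonempty. Setting $U = \sigma^{-1}(B_0)$ and $W = \sigma^{-1}(B_1)$, a direct computation on cross-block triples shows that when $b = 0$ the color's second coordinate is $0$ exactly when the triple has two elements in $B_0$, forcing $|e \cap W| \ne 2$ for every edge $e$; symmetrically, $b = 1$ forces $|e \cap U| \ne 2$. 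Assume without loss of generality $b = 0$.

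In Case A ($|W| \ge 2$), $W$ is a collapsible set in the sense of Definition~\ref{def:reducible}, so $G$ is reducible to $(H, G[W])$, where $H$ is obtained by collapsing $W$ to a new vertex $v^*$. Induction on $\sigma|_W$ (a monochromatic copy of $G[W]$ in $\phi_q$) yields $G[W] \in \calU$. For $H$, I would define $\sigma' : V(H) \to X$ by sending $v^*$ to an arbitrary $w^* \in B_1$ and keeping $\sigma$ on $U$. Each edge of $H$ then inherits color $(d, 0)$: edges of $G[U]$ are unchanged, and the color of a collapsed edge $\{u_1, u_2, v^*\}$ equals $(\delta(\delta(\sigma(u_1), \sigma(u_2)), M), 0)$, which matches $(d, 0)$ because some $\{u_1, u_2, w\} \in E(G)$ had exactly this color and the value depends only on $u_1, u_2$, not on the chosen representative of $W$. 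Induction then gives $H \in \calU$, and the recursive definition of $\calU$ yields $G \in \calU$.

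Case B ($|W| = 1$, $W = \{w_0\}$) is the hardest. If $w_0$ is isolated in $G$, induction on $G - w_0$ via the embedding restricted to $B_0$, combined with the trivial closure of $\calU$ under adding isolated vertices, finishes the proof. If $w_0$ has nonempty link $L(w_0) \subseteq \binom{U}{2}$, the natural partition $\{U, \{w_0\}\}$ fails Definition~\ref{def:decomposable} since every link-edge has two vertices in $U$. My plan for this subcase is to iterate the block decomposition inside $B_0$ (maximum $M' < M$, sub-blocks $B_{00}, B_{01}$) and exploit the color constraint on link-edges: each $\{u_1, u_2\} \in L(w_0)$ must satisfy $\delta(\sigma(u_1), \sigma(u_2)) \in I := \{v : \delta(v, M) = d\}$, an explicit interval of bit positions of length $2^d$ lying below $M$. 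Combining this arithmetic restriction with the sub-$b = 0$ constraints that $G[U]$ inherits inside $B_0$, I would construct a refined partition $V(G) = V_1 \cupdot \dots \cupdot V_t$ with $|V_1| \ge 2$, $t \ge 2$, and $|e \cap V_i| \ne 2$ for every edge $e$ and every $i$, so that Lemma~\ref{lem:same} applies. The main obstacle will be exhibiting this refined partition in full generality; I expect the verification to require a secondary induction on the depth of the block-tree of $X$, with a case split on whether $M' \in I$ (which permits cross-sub-block link-edges and forces deeper recursion) or $M' \notin I$ (which confines link-edges to a single sub-block and makes the decomposition assemble more directly).
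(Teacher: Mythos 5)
Your base case, the reduction to $b = 0$ by bit-complementing, and Case A are all correct, and the verification that the color of a collapsed edge $\{u_1, u_2, v^*\}$ is independent of the chosen representative of $W$ (because $\delta(\sigma(u_2), w^*) = M$ for every $w^* \in B_1$) is exactly the computation needed there. However, Case B, which you correctly identify as the hard case, is not actually proved: you describe a plan (iterate the block decomposition inside $B_0$, track the interval $I$ of admissible link-$\delta$ values, case split on $M' \in I$, run a secondary induction on the depth of the block tree) but you do not carry it out. As written, the proof has a genuine gap precisely where the difficulty lies, and it is not clear that the iteration terminates in a clean decomposition matching Definition~\ref{def:decomposable}, since the interval $I$ need not align with the sub-block boundaries of $B_0$.

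The paper avoids this entirely by decomposing one level lower. Rather than splitting $X$ at the top bit $M = \max_{x<y\in X}\delta(x,y)$ of the \emph{vertices}, it forms the sequence of consecutive differences $\delta_i = \delta(x_i,x_{i+1})$ and lets $m$ be the highest bit position at which the $\delta_i$'s are not all equal; the decomposition of $[h]$ is into maximal intervals on which $\bit(\delta_i,m)$ is constant. With $m = t$ (the first coordinate of the color), the paper gets either $G \in \calU_1$ or a collapsible suffix of size at least two; with $m > t$, it gets an interval partition with at least two parts, at least one of which has size at least two (because the $\bit(\delta_i,m)$ are not all equal), so Definition~\ref{def:decomposable} and Lemma~\ref{lem:same} apply directly; with $m < t$, no edge can even be colored $(t,0)$. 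The key advantage is that the bit $m$ of the $\delta_i$'s always yields a usable partition of $[h]$ in one step, whereas the top-level bit of the $x_i$'s can produce the degenerate $|B_1| = 1$ split that blocks your argument. If you want to salvage your approach you would need to prove Case B in full, and I would recommend instead adopting the $\delta_i$-level decomposition, which is the real idea of the proof.
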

    
    \begin{proof}
        We prove the lemma using induction on $|V(G)|.$ For $|V(G)| < 3,$ there is nothing to prove.
        
        Now, consider a $3$-uniform hypergraph $G$ such that there is a monochromatic copy of $G$ in $\phi = \phi_q.$ Suppose the statement holds for all $3$-uniform hypergraphs with fewer vertices. Denote $N = N_q = 2^{2^{q/2}}.$ Suppose the color of this monochromatic copy is $(t, s),$ where $t \in \{0, \dots, q/2-1\}$ and $s \in \{0, 1\}.$  Let the vertices of $G$ be $\{1, \dots, h\}$ and without loss of generality, suppose that in the monochromatic copy vertex $i$ is embedded into $x_i$ where $0 \le x_1 < x_2 < \dots < x_h < N.$ 
        
        If $s = 1,$ for $i \in [h],$ define $x_i' = N-1 - x_i,$ i.e. $x_i'$ is obtained by complementing the binary representation of $x_i.$ Then, we have $0 \le x_h' < x_{h-1}' < \dots < x_1 < N$ and $\delta(x_i', x_j') = \delta(x_i, x_j)$ for any $1 \le i < j \le h.$ It follows that the set $\{x_h', \dots, x_1'\}$ forms a monochromatic copy of $G$ in color $(t, 0).$ Therefore, we may assume that $s = 0.$
        
        For $1 \le i < h,$ let $\delta_i \coloneqq \delta(x_i, x_{i+1}).$ Observe that by Property~\ref{prop:maximum}, we have
        \begin{equation} \label{eq:delta-xu-xv}
            \forall u,v, 1 \le u < v \le h, \delta(x_u, x_v) = \max_{u \le i < v} \delta_i.
        \end{equation}
        Let $m$ be the largest nonnegative integer such that $\bit(\delta_i, m)$ for $i \in [h-1]$ are not all equal. Since $\delta_1 \ne \delta_2,$ $m$ is well-defined. By Property~\ref{prop:maximum}, this choice of $m$ implies
        \begin{equation} \label{eq:bit-m}
            \forall 1 \le u < v \le h, \, \bit(\delta(x_u, x_v), m) = 1 \iff \exists i, u \le i < v, \bit(\delta_i, m) = 1.
        \end{equation}
        
        Suppose first that $m = t.$ Then,
        \begin{equation} \label{eq:m=t-edge}
            \forall u, v, w, u < v < w, uvw \in E(G) \implies \bit(\delta(x_u, x_v), m) = 0 \text{ and } \bit(\delta(x_v, x_w), m) = 1.             
        \end{equation}
        Indeed, this is true because for an edge $uvw,$ we have $\phi(x_ux_vx_w) = (m, 0).$

        Now, let $i$ be the minimal index such that $\bit(\delta_i, m) = 1.$
        Suppose that $i = h-1.$ Then by~\eqref{eq:delta-xu-xv}, for any $1 \le u < v \le h-1,$ we have $\bit(\delta(x_u, x_v), m) = \bit(\max_{u \le i < v} \delta_i, m) = 0.$ By~\eqref{eq:m=t-edge}, it follows that every edge of $G$ contains the last vertex $h,$ implying that $G \in \calU_1 \subseteq \calU.$
        
        Hence, we may assume that $i < h-1.$ Then, in $G$ there can be no edge $uvw$ with $u \le i$ and $v, w \ge i+1,$ as then $\bit(\delta(x_u, x_v), m) = 1$ by \eqref{eq:bit-m}, contradicting~\eqref{eq:m=t-edge}. Therefore, we can collapse the set $\{i+1, \dots, h\},$ which has at least two vertices by our assumption, to obtain a new $3$-uniform hypergraph $H$ on the vertex set $\{1, 2, \dots, i, v^*\}.$ Let us show that the vertex set $\{x_1, \dots, x_{i+1}\}$ forms a monochromatic copy of $H$ in color $(t, 0)$ with $j$ being embedded into $x_j$ for $j \in [i]$ and $v^*$ embedded into $x_{i+1}.$ Indeed, $\{x_1, \dots, x_i\}$ is a copy of $H[\{1, \dots, i\}]$ in color $(t, 0)$ because $\{x_1, \dots, x_h\}$ is a copy of $G$ in color $(t,0).$ Furthermore, for every edge $\{j, k, v^*\} \in E(H),$ we have $\bit(\delta(x_j, x_k), t) = 0$ and $\bit(\delta(x_k, x_{i+1}), t) = 1$ by our choice of $i$ and using Property~\ref{prop:maximum} so $\phi(x_j x_k x_{i+1}) = (t, 0)$. In $\phi,$ there clearly exists a monochromatic copy of the induced subgraph $G[\{v_{i+1}, \dots, v_h\}]$ so both $H$ and $G[\{v_{i+1}, \dots, v_h\}]$ are in $\calU$ by the induction hypothesis. It follows that $G \in \calU,$ as needed.
        
        Finally, suppose that $m \neq t.$ If $m < t,$ then by \eqref{eq:delta-xu-xv}, no edge is colored $(t, 0),$ so we assume $m > t.$ Let $1 \le i_1 < \dots < i_p < h$ denote all indices $i$ for which $\bit(\delta_i, m) = 1$ and note that $2 \le p+1 \le h.$ Let $I_1, \dots, I_{p+1}$ denote the intervals between consecutive $i_j's$. Formally, let $I_1 = \{1, \dots, i_1\},$ for $2 \le j \le p,$ let $I_j = \{i_{j-1}+1, \dots, i_j\}$ and let $I_{p+1} = \{i_p+1, \dots, h\}.$ 
        
        Suppose that there is an edge $e = uvw\in E(G)$ with $1 \le u < v < w \le h$ and $j \in [p+1]$ such that $|e \cap I_j| = 2.$ Since $I_j$ is an interval, we have either $e \cap I_j = \{u, v\}$ or $e \cap I_j = \{v, w\}.$ In the former case, by the definition of $I_j,$ using \eqref{eq:bit-m}, we have $\bit(\delta(x_u, x_v), m) = 0$ and $\bit(\delta(x_v, x_w), m) = 1,$ which implies $\phi(e) = (m, 0).$ Completely analogously, in the latter case we obtain $\phi(e) = (m, 1).$ Both cases contradict our assumptions, so we conclude that for any $e \in E(G)$ and $j \in [p+1],$ it holds that $|e \cap I_j| \neq 2.$
        
        For $j \in [p+1],$ denote $F_j = G[I_j].$ Furthermore, let $H$ be the hypergraph on the vertex set $\{1, \dots, p+1\}$ with edges $\{ uvw \, \vert \, \exists e \in G, \, |e \cap I_u| = |e \cap I_v| = |e \cap I_w| = 1\}.$ By definition, the hypergraphs $H, F_1, \dots, F_{p+1}$ have fewer vertices than $H.$ Hence, $G$ is decomposable into $(H; F_1, \dots, F_{p+1}).$ By the induction hypothesis, $F_1, \dots, F_{p+1} \in \calU$ since the vertices $\{ x_u \, \vert \, u \in I_j\}$ form a copy of $F_j$ in color $(t, 0)$ by assumption. For $j \in [p+1],$ let $y_j = x_{\min{I_j}}.$ Next we show that $\{y_1, \dots, y_{p+1}\}$ contains a monochromatic copy of $H$ in color $(t, 0)$. Indeed, consider the embedding which maps $i \in V(H) = [p+1]$ into $y_i.$ %Note that for every $u \in I_a, v \in I_b$ with $a < b,$ we have $\delta(x_u, x_v) = \max_{a \le j < v} \delta_j = \delta(y_u, y_v)$ by \eqref{eq:delta-xu-xv}.
        Consider an arbitrary edge $uvw \in E(H)$ with $1 \le u < v < w \le p+1.$ Recall that by definition there is a corresponding edge $abc \in E(G)$ with $a \in I_u, b \in I_v, c \in I_w.$ By~\eqref{eq:delta-xu-xv} and the definition of $i_1, \dots, i_p,$ we have
        \[ \delta(y_u,y_v) = \delta(x_{\min I_u}, x_{\min I_v}) = \max_{\min I_u \le j < \min I_v} \delta_j = \max_{u \le \ell < v} \delta_{i_\ell} = \delta(x_a, x_b), \]
        and analogously $\delta(y_vy_w) = \delta(x_b, x_c).$ Hence, $\phi(y_uy_vy_w) = \phi(x_ax_bx_c) = (t, 0).$ Thus the claimed embedding is indeed monochromatic so, by the induction hypothesis, we have $H \in \calU,$ which, using Lemma~\ref{lem:same} implies that $G \in \calU$ as well.
    \end{proof} 

    An exponential lower bound for non-tripartite $3$-uniform hypergraphs was proved in~\cite{axenovich}, but we include a proof for the sake of completeness.
   \begin{lemma} \label{lem:non-tripatite-lb}
        If $G$ is a non-tripartite $3$-uniform hypergraph, then $r(G;q) = 2^{\Omega(q)}.$
    \end{lemma}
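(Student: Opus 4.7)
The plan is to construct, via the probabilistic method, a $q$-coloring of $K_N^{(3)}$ with $N = 2^{\Omega(q)}$ in which every color class is a tripartite $3$-graph. The crux is the easy observation that every subgraph of a tripartite $3$-graph is again tripartite (simply restrict the tripartition to the subgraph's vertex set), so the non-tripartite $G$ cannot embed into any color class. This immediately gives $r(G;q) > N = 2^{\Omega(q)}$.

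To produce such a coloring, I will choose $q$ functions $\pi_1,\dots,\pi_q\colon [N]\to[3]$ independently, each assigning an i.i.d.\ uniformly random value in $[3]$ to every vertex. For a fixed triple $\{x,y,z\}$, the probability that $\pi_i(x),\pi_i(y),\pi_i(z)$ are all distinct is $3!/3^3 = 2/9$, and these events are independent across $i\in[q]$. The probability that some triple is non-rainbow under every $\pi_i$ is therefore at most
\[
\binom{N}{3}\left(\tfrac{7}{9}\right)^q,
\]
which is strictly less than $1$ whenever $N \le c\cdot(9/7)^{q/3}$ for a suitable constant $c>0$. Fixing any realization achieving the good event, every triple is rainbow under at least one $\pi_i$; assign each triple arbitrarily to one such index $i$ as its color. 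The $i$-th color class is then contained in the $3$-graph of all $\pi_i$-rainbow triples, which is tripartite with parts $\pi_i^{-1}(1),\pi_i^{-1}(2),\pi_i^{-1}(3)$, as required.

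There is no serious obstacle here beyond the initial reduction: once one recognises that making every color class tripartite already suffices to forbid the non-tripartite $G$, the rest is a routine first-moment calculation. The construction yields a constant $\log_2(9/7)/3 \approx 0.12$ in the exponent, which is more than enough to conclude $r(G;q) = 2^{\Omega(q)}$. (If one wished to be explicit, an equivalent deterministic construction is obtained by taking $V = [3]^{cq}$ and, for each triple, coloring by any coordinate at which the three values happen to be distinct; the probabilistic argument shows such a coordinate exists with positive probability after a random permutation of labels, and a direct pigeonhole-style check confirms the existence of a good coloring.)
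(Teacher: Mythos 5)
Your proof is correct and takes essentially the same approach as the paper: the paper also colors each triple by the index of the first of $q$ random tripartitions under which it is rainbow, using the same $2/9$ per-triple success probability and a first-moment union bound to conclude for $N = 2^{\Omega(q)}$. The only cosmetic difference is that you sample i.i.d.\ uniform vertex $3$-colorings while the paper uses random copies of the complete balanced tripartite $3$-graph; the calculation is identical.
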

    \begin{proof}
         Let $N = 2^{2q / 27}$ and consider $q$ random copies of the complete balanced tripartite $3$-uniform hypergraph, which has at least $\frac{2}{9} \binom{N}{3}$ edges, and define $\phi$ to be the coloring where each triple of $K^{(3)}_N$ is colored by the index of the first copy in which it appears. Since each color induces a tripartite graph, there is no copy of $G.$ It remains to show that with positive probability all edges are colored. Indeed, by a union bound, the probability that not all edges are colored is at most
        \[ \binom{N}{3} (1 - 2/9)^q < N^3 e^{-2q / 9} < 1, \]
        as needed.
    \end{proof}
        
    \subsection{Putting it together}
    \begin{proof}[Proof of Theorem~\ref{thm:3-graph}]        
        The lower bound in Part~\ref{thm:tripartite} is obtained by coloring edges of a complete $3$-uniform hypergraph on $\Omega(q^{1/3})$ vertices into distinct colors. For the upper bound, if $G$ is tripartite, by a well known result of Erd\H{o}s~\cite{erdos-hypergraph-kst}, there is an $\varepsilon > 0$ such that for large enough $N$, any $3$-uniform hypergraph on $N$ vertices with at least $N^{3-\varepsilon}$ edges, contains a copy of $G$. Hence, if we are given a $q$-colored complete graph on $N = (10q)^{1/\varepsilon}$ vertices, one of the colors will have at least $\binom{N}{3} / q > N^{3-\varepsilon}$ edges and thus contains a copy of $G$.

        The lower bound in Part~\ref{thm:3-graph-single} is given by Lemma~\ref{lem:non-tripatite-lb} and the upper bound in Corollary~\ref{cor:upper-bound-Ui}.

        Finally, the lower bound in Part~\ref{thm:double-exp} is given by Lemma~\ref{lem:lb-double-exp}, while the upper bound follows from the upper bound for cliques proved by Erd\H{o}s and Rado~\cite{erdos1952combinatorial}.
    \end{proof}

\noindent{\bf Remark.}
    If $G$ is tripartite and has at least two edges, its multicolor Ramsey number $r(G;q)$ is given by its extremal (or Tur\'{a}n) number $\ex(N, G)$ up to a logarithmic factor in the number of colors. Indeed, every color class in the Ramsey coloring has at most $\ex(N, G)$ edges, which implies that $q \geq 
    \Theta( N^3/\ex(N, G))$. On the other hand, by taking $q=O(\log N \cdot N^3/\ex(N, G) )$ random copies of an extremal $3$-uniform hypergraph on $N$ vertices and using similar computations as in Lemma~\ref{lem:non-tripatite-lb}, one can obtain a coloring with no monochromatic copy of $G$.

    \section{Examples} \label{sec:examples}
    Recall that for non-tripartite $G \in \calU,$ we have the lower bound $r(G; q) \ge 2^{\Omega(q)}$ given by Lemma~\ref{lem:non-tripatite-lb} while the upper bound is of the form $2^{O(q^\ell \log q)}$ for some $\ell \ge 1.$ Unless $G \in \calU_1,$ these bounds are far apart. However, in certain cases we can refine the lower bound. We start with a definition.

    \begin{defn}
        We say that a $3$-uniform hypergraph $G$ is \emph{forward-colorable} if there is a vertex partition $V_1 \cupdot \dots \cupdot V_t = V(G)$ such that for any edge $e \in E(G),$ there are $i < j$ for which $|e \cap V_i| = 1$ and $|e \cap V_j| = 2.$
    \end{defn}

    Observe that $\calU_2$ contains all forward-colorable $3$-uniform hypergraphs. Indeed, suppose $G$ is forward colorable with a vertex partition $V_1 \cupdot \dots \cupdot V_t$ as defined above. If $t = 2,$ every edge of $G$ touches $V_1$ in exactly one vertex, so $G \in \calU_1.$ Else, $U = V_1 \cup V_2$ is a collapsible set and $G$ is reducible to the pair $(H, G[U])$ where $H$ is forward-colorable with $t-1$ parts and $G[U] \in \calU_1.$ The claim follows by induction on $t$.
    
    Let $\calL_1$ be the maximal family containing all forward-colorable $3$-uniform hypergraphs as well as any $3$-uniform hypergraph which is reducible to some $(H; F_1, \dots, F_t)$ such that $H$ is tripartite and $F_1, \dots, F_t \in \calL_1.$
    \begin{lemma} \label{lem:rsquared-lb}
        For any $3$-uniform hypergraph $G$ not in $\calL_1,$ it holds that $r(G; q) \ge 2^{\Omega(q^2)}.$
    \end{lemma}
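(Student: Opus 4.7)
The plan is to build an explicit $q$-coloring $\phi$ of the complete $3$-graph on $N = 2^{\Omega(q^2)}$ vertices so that every monochromatic sub-$3$-graph lies in $\calL_1$; the lemma follows at once, since any $G \notin \calL_1$ then has no monochromatic copy.

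I would set $M = c q^2$ for a small constant $c>0$, identify the vertex set with $\{0,\dots,2^M-1\}$, and partition $\{0,\dots,M-1\}$ into $B = \lfloor \sqrt{q/2} \rfloor$ equal consecutive blocks with $b(\cdot) \in [B]$ denoting the block index. Using the function $\delta$ of Lemma~\ref{lem:lb-double-exp}, I define for $x<y<z$
\[
\phi(\{x,y,z\}) := \bigl(b(\delta(x,y)),\; b(\delta(y,z)),\; \mathbbm{1}\{\delta(x,y) > \delta(y,z)\}\bigr),
\]
giving at most $2B^2 \le q$ colors. The identity $b(\max(a,b)) = \max(b(a),b(b))$ (which holds because the blocks are intervals) will translate the color of an edge $\{x_u,x_v,x_w\}$ into pointwise statements about the sequence $b_i := b(\delta(x_i,x_{i+1}))$ via property~\ref{prop:maximum}.

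I then argue by induction on $|V(\Gamma)|$ that every monochromatic sub-$3$-graph $\Gamma$ of $\phi$ lies in $\calL_1$. Fix a monochromatic copy $x_1<\dots<x_h$ of color $(b_1^*,b_2^*,s)$; WLOG $s=0$ (forward) via the label-complementation trick from Lemma~\ref{lem:lb-double-exp}. Monochromaticity forces $\max_{u\le i<v}b_i = b_1^*$ and $\max_{v\le i<w}b_i = b_2^*$ for every edge $uvw$. In the case $b_1^*<b_2^*$, no position $i$ with $b_i>b_2^*$ lies in any edge span $[u,w)$, and no position with $b_i\in(b_1^*,b_2^*]$ lies in any left half $[u,v)$. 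Partitioning $[h]$ first by the \emph{high} positions ($b_i>b_2^*$) into intervals $J_1,\dots,J_{p+1}$, and then each $J_s$ by its \emph{medium} positions ($b_i>b_1^*$) into sub-intervals $J_s^{(1)},\dots,J_s^{(\ell_s+1)}$, one checks that every edge has the form ``two vertices $u,v$ in some $J_s^{(r)}$, one vertex $w$ in a strictly later $J_s^{(r')}$ of the same $J_s$''. Reversing the order of medium-sub-intervals within each $J_s$ then yields a forward-coloring of $\Gamma$, so $\Gamma\in\calL_1$.

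In the remaining case $b_1^*=b_2^*=j$, a parallel analysis using the level-$j$ positions shows that every edge has one vertex in each of three distinct level-$j$ sub-intervals within a common high-interval, producing a decomposition $\Gamma = (H;F_1,\dots,F_t)$ where each $F_i$ is edgeless (hence trivially in $\calL_1$) and $H$ is the contracted $3$-graph on the sub-intervals. The main obstacle is that $H$ need not automatically be tripartite: a priori it could be any $3$-graph on sub-intervals within a high-interval. My plan is to handle this by refining $\phi$ with one extra coordinate that enforces a $3$-partition on level-$j$ sub-intervals---for example encoding a residue class modulo $3$ of the sub-interval index, or alternatively invoking Theorem~\ref{thm:3-graph}\ref{thm:tripartite} (polynomial Ramsey numbers for tripartite targets) to buy, at the cost of a constant factor in the color count, the required tripartite structure on any diagonal monochromatic copy. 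With such a refinement, Case~2 yields $\Gamma = (H;F_1,\dots,F_t)$ with $H$ tripartite and each $F_i$ edgeless, so $\Gamma\in\calL_1$, completing the induction.
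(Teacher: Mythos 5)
Your construction takes a genuinely different route from the paper's: a single stepping-up on $M=cq^2$ bits partitioned into $\sqrt{q/2}$ blocks of size $\Theta(q^{3/2})$, whereas the paper uses a product of $q$ copies of a base tripartite Ramsey coloring $\phi$ on $N=2^{\Omega(q)}$ vertices, identifying the vertex set with $[N]^q$ and coloring a triple by the first coordinate in which the three vectors disagree (using $\phi$ at that coordinate when all three coordinate values are distinct, and a ``degenerate'' color $(j,0)$ or $(j,1)$ otherwise). Both give $2^{\Omega(q^2)}$ vertices with $O(q)$ colors, and your analysis of the off-diagonal case $b_1^*<b_2^*$ via high/medium/low positions and a reversal of the medium sub-intervals is sound and in the same spirit as the forward-colorability argument in the paper's degenerate-color case.

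However, the diagonal case $b_1^*=b_2^*$ is not a loose end to be patched; it is a genuine gap, and the coloring you define actually fails there. Take $\delta_1<\delta_2<\delta_3$ all in block $j$ (possible since the block has $\Theta(q^{3/2})\gg 3$ bit positions) and set $x_1=0,\,x_2=2^{\delta_1},\,x_3=2^{\delta_2},\,x_4=2^{\delta_3}$. Then $\delta(x_u,x_v)=\delta_{v-1}$ for all $u<v$, so every triple among $x_1,\dots,x_4$ receives color $(j,j,0)$, giving a monochromatic $K_4^{(3)}$, which is not even in $\calU$, let alone $\calL_1$. Neither of your proposed fixes repairs this: the ``sub-interval index mod $3$'' is not a function of the triple $\{x,y,z\}$ alone (it depends on the ambient monochromatic copy), so it cannot be encoded in the color; and appending a tripartite Ramsey coloring component directly would require $\Omega(q^{3/2})$ colors to cover $2^{\Theta(q^{3/2})}$ values inside a block, blowing past the color budget. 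The paper avoids this by making the blocks only $\Theta(q)$ bits wide (hence $q$ of them), exactly so that the tripartite Ramsey coloring with $q$ colors applies within each block; when the monochromatic copy has an inherited color $c\in[q]$, the contracted graph $H$ on the first-differing coordinate is itself a monochromatic copy in $\phi$, hence tripartite, which is what makes the decomposition land in $\calL_1$. To salvage your approach you would need to shrink the blocks to $\Theta(q)$ bits and use a tripartite Ramsey coloring within each one, at which point you essentially recover the paper's product construction.
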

    \begin{proof}
        Let $q$ be a large integer and let $\phi$ be a coloring of $K^{(3)}_N$ with colors $\{1, \dots, q\}$ containing no monochromatic non-tripartite graph given by Lemma~\ref{lem:non-tripatite-lb}, where $N = 2^{\Omega(q)}.$ We define a coloring $\phi'$ on $N^q$ vertices using $3q$ colors and containing no monochromatic copy of any $3$-uniform hypergraph in $\calL_1$, the existence of which implies the statement. To describe $\phi',$ we identify the vertex set $[N^q]$ with $[N]^q.$ For a vector $\ba \in [N]^q$ we write $\ba = (\ba^1, \dots, \ba^q).$ Consider three vectors $\bx, \by, \bz \in [N]^q$ where $\bx < \by < \bz$ according to the lexicographic ordering which is defined as $\ba < \bb$ if for some $i \in [q]$, $\ba^i < \bb^i$ and $\ba^j = \bb^j$ for all $1 \le j < i$. Let $j$ be the first coordinate for which $\bx^j, \by^j, \bz^j$ are not all equal. If $\bx^j, \by^j, \bz^j$ are all distinct, then set $\phi'(\bx, \by, \bz) = \phi(\bx^j, \by^j, \bz^j).$ Else if, $\bx^j < \by^j = \bz^j,$ set $\phi'(\bx, \by, \bz) = (j, 0)$ and if $\bx^j = \by^j < \bz^j,$ then set $\phi'(\bx, \by, \bz) = (j, 1).$ Note that this  covers all cases by the assumed ordering.
        
        Now, we prove, by induction on $|V(G)|,$ that $\phi'$ is a Ramsey-coloring for any $3$-uniform hypergraph $G \not\in \calL_1.$ Let $G$ be a $3$-graph, denote $V(G) = \{1, \dots, h\}$ and suppose in $\phi'$ there exists a monochromatic copy of a $G$ with vertex $v \in [h]$ embedded into $\bx_v \in [N]^q.$ Assume the color of this copy is $(j, 0)$ or $(j, 1),$ for some $j \in [r].$ For $s \in [N],$ set $V_s = \{ v \in V(G) \, \vert \, \bx_v^j = s\}$. Then if the color of the copy is $(j, 0),$ it is easy to see that $G$ is forward-colorable with vertex partition $V_1 \cupdot \dots \cupdot V_N$ while if the color is $(j, 1)$, then $G$ is forward colorable with vertex partition $V_N \cupdot \dots \cupdot V_1.$ Thus in either case, we have $G \in \calL_1$. Now suppose the color of this monochromatic copy is $c \in [q].$ Let $j$ be the first coordinate in which $\bx_1, \dots, \bx_h$ are not all equal. Then, there is a partition of the vertex set $V(G) = V_1 \cupdot \dots \cupdot V_m,$ into $m \ge 2$ non-empty sets such that the vertices $V_i$ correspond to vectors with the same $j$-th coordinate. Let $H$ be the hypergraph with vertex set $[m]$ and edge set $E(H) = \{ abc \, \vert \, E(G) \cap (V_a \times V_b \times V_c) \neq \emptyset \}.$ It is easy to see that there is a monochromatic copy of $H$ in $\phi,$ and hence $H$ is tripartite. Additionally, for all $j \in [m],$ there trivially exists a monochromatic copy of $G[V_j]$ in $\phi'$ and hence $G[V_j] \in \calL_1$ by the induction hypothesis. It follows that $G \in \calL_1,$ as required.
    \end{proof}

    \begin{proposition} \label{prop:example-q-squared}
        There is a $3$-uniform hypergraph $G$ for which $r(G; q) = 2^{q^{2 + o(1)}}.$ 
    \end{proposition}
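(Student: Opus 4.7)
The plan is to exhibit an explicit 3-uniform hypergraph $G$ satisfying $G \in \calU_2 \setminus \calL_1$. Once such a $G$ is constructed, Corollary~\ref{cor:upper-bound-Ui} gives $r(G;q) \le 2^{O(q^2 \log q)}$ while Lemma~\ref{lem:rsquared-lb} gives the matching lower bound $r(G;q) \ge 2^{\Omega(q^2)}$. Since $2^{O(q^2 \log q)} = 2^{q^{2+o(1)}}$, together these yield $r(G;q) = 2^{q^{2+o(1)}}$, as required.

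The construction begins from the hypergraph $G_2$ of the introduction, which has vertex set $\{x\} \cup A \cup B$ with $|A|=|B|=4$, copies of $\Star^{(3)}(4)$ inside each of $A$ and $B$, and all cross-edges $\{x,a,b\}$ for $a \in A, b \in B$. By the argument given in the introduction, $G_2 \in \calU_2$ via the collapsible set $U = A$; however $G_2$ also lies in $\calL_1$ through the natural decomposition $(A, B, \{x\})$, whose quotient hypergraph is a single (tripartite) hyperedge. The idea is then to enrich $G_2$ by introducing an additional ``central'' vertex $y$ together with a small and carefully chosen set of further edges (for instance $\{x,y,a_0\}$ and $\{x,y,b_0\}$, possibly together with a few asymmetric edges linking $y$ with both $A$ and $B$), designed so that: (a) the set $U = A$ remains collapsible and its quotient continues to lie in $\calU_1$, preserving $G \in \calU_2$; and (b) every admissible decomposition of $V(G)$ yields a quotient hypergraph containing a non-tripartite configuration, typically a copy of $K_4^{(3)}$ on the four part-representatives $A, B, \{x\}, \{y\}$.

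The verification proceeds in three steps. First, I would check $G \in \calU_2$ by exhibiting the collapsible set $U$ together with the chain of reductions showing that the quotient $H$ lies in $\calU_1$ (via a further collapse to a tripartite 3-graph in $\calU_0$). Second, I would rule out forward-colorability of $G$ by a case analysis on the ``singleton'' vertex of each edge: the $\Star^{(3)}(4)$-structures inside $A$ and $B$ rigidly pin down the relative part indices of $a_0$ versus $a_1,\dots,a_3$ (and similarly in $B$), and the added edges involving $y$ then force contradictory strict inequalities among the remaining free indices. Third, and most delicately, I would enumerate all decompositions $V(G) = V_1 \cupdot \ldots \cupdot V_t$ satisfying $|V_i \cap e| \neq 2$ for every edge, and verify that each yields a quotient hypergraph containing a non-tripartite substructure such as $K_4^{(3)}$, thereby ruling out $\calL_1$-membership. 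This third step is the main obstacle: although there could a priori be many decompositions to check, the added entangling edges severely restrict which subsets of $V(G)$ can be parts of size at least $2$ (typically only $A$ and $B$ survive), leaving only a short list of candidate decompositions, each easily seen to fail the tripartiteness of the quotient. Once these three verifications are complete, Corollary~\ref{cor:upper-bound-Ui} and Lemma~\ref{lem:rsquared-lb} together immediately give $r(G;q) = 2^{q^{2+o(1)}}$.
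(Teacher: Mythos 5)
Your high-level strategy exactly matches the paper's: produce a hypergraph $G \in \calU_2 \setminus \calL_1$, then combine Corollary~\ref{cor:upper-bound-Ui} with Lemma~\ref{lem:rsquared-lb}. That framing is correct and sound. But what you have written is a plan, not a proof, and the gap is substantive. Your hypergraph is never actually pinned down: you hedge with ``for instance $\{x,y,a_0\}$ and $\{x,y,b_0\}$, possibly together with a few asymmetric edges,'' so there is no concrete object on which any of the three verifications can be carried out. All three checks are phrased as ``I would check / rule out / enumerate,'' and you yourself flag the third (enumerating all admissible decompositions of $V(G)$ and showing each has a non-tripartite quotient) as ``the main obstacle.'' That third step is precisely where the mathematical content of the proposition lives, and it is left undone. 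Note also that adding edges can easily destroy the $\calU_2$ membership you need: for example, if you add an edge through $y$ that meets $A$ in two vertices, $A$ is no longer collapsible, and even with $A$ still collapsible, the quotient after collapsing $A$ must be re-verified to lie in $\calU_1$ (the naive attempt I tried on your suggested edges already runs into trouble identifying a set meeting each quotient edge exactly once). So without a fixed edge set, even step one is unverified.

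For comparison, the paper takes a leaner construction: blow up one non-central vertex of $\Star^{(3)}(4)$ into a $4$-set $A$ carrying its own copy of $\Star^{(3)}(4)$, giving a $7$-vertex hypergraph $G$. Collapsing $A$ reduces $G$ to $(\Star^{(3)}(4),\Star^{(3)}(4))$, so $G \in \calU_2$ is immediate. Ruling out $\calL_1$ is then a short, clean case analysis exploiting a single rigidity observation about $\Star^{(3)}(4)$: any nonempty $S \subseteq V(\Star^{(3)}(4))$ with every edge meeting $S$ in an odd number of vertices has $|S| \in \{1,4\}$. This observation both kills forward-colorability and forces any decomposition part touching two of the outer-star vertices to be all of $V(G)$, so the quotient of any valid decomposition must contain $\Star^{(3)}(4)$ and hence is not tripartite. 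If you want to salvage your approach you should fix the edge set, then try to extract an analogous rigidity lemma for your $G$, because the ``enumerate all decompositions'' step is not going to close without one.
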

    \begin{proof}
        Let $G$ be the $3$-uniform hypergraph obtained by blowing up a non-central vertex of $\Star^{(3)}(4)$ by a set $A$ of $4$ vertices and placing a copy of $\Star^{(3)}(4)$ inside $A$. Let $v, a_1, a_2, a_3$ denote the vertices of $A$ with $v$ being the center and let $u, b_1, b_2$ denote the remaining vertices with $u$ being the center. 
        
        By collapsing the set $A$ we see that $G$ is reducible to $(\Star^{(3)}(4), \Star^{(3)}(4))$ implying that $G \in \calU_2$ and thus the upper bound follows by Corollary~\ref{cor:upper-bound-Ui}.
        
        Next we show that $G \not\in \calL_1$ and then the lower bound follows from Lemma~\ref{lem:rsquared-lb}. 
        %Note if $S$ is a subset $V(\Star^{(3)}(4))$ such that any edge of $\Star^{(3)}(4)$ contains either $1$ or $3$ vertices of $S,$ then either $|S| = 1$ or $|S| = 4.$ 
        First, suppose that $G$ is forward-colorable and let $V(G) = V_1 \cupdot \dots \cupdot V_t$ be a partition which certifies it. Then there are indices $i < j$ such that $v \in V_i$ and $\{a_1, a_2, a_3\} \subseteq V_j.$ By the same argument, since $\{u, v, b_1, b_2\}$ form a $\Star^{(3)}(4)$ with center $u,$ we have that $b_1, b_2 \in V_i$ and $u \in V_\ell$ for some $\ell < i.$ But, then the edge $ub_1a_1$ has its vertices in three distinct sets, a contradiction.
        
        Now, suppose that $G$ is decomposable into $(H; F_1, \dots, F_t)$ with a partition $V(G) = V_1 \cupdot \dots \cupdot V_t.$
        Note that if $S$ is a nonempty subset of $V(\Star^{(3)}(4))$ such that any edge of $\Star^{(3)}(4)$ contains either $1$ or $3$ vertices of $S,$ then either $|S| = 1$ or $|S| = 4.$ Suppose that some $V_i$ contains at least two vertices from  $v, u, b_1, b_2$. Since these vertices form a star, by the previous observation, it follows that $V_i$ contains all of them. Furthermore, since any $w \in A$ forms a copy of $\Star^{(3)}(4)$ with $\{u, b_1, b_2\},$ by the same observation, we get $V_i = V(G),$ a contradiction. Therefore, the vertices $u, v, b_1, b_2$ are in different sets, implying that $\Star^{(3)}(4) \subseteq H.$ Since $\Star^{(3)}(4)$ is not tripartite, it follows that $G \not\in \calL_1,$ as claimed.
    \end{proof}
    
    Let $G^{(3)}(n, p)$ denote the random $3$-uniform hypergraph on $n$ vertices where each hyperedge is included independently with probability $p$.

    \begin{proposition} \label{prop:random}
        There is a positive constant $C$ such that if $p \ge \frac{C}{n^2},$ then for $G \sim G^{(3)}(n, p),$ with high probability, we have $r(G; q) \ge 2^{2^{q/2}}.$
    \end{proposition}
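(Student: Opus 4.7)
The plan is to invoke Theorem~\ref{thm:3-graph}\ref{thm:double-exp}, which yields $r(G;q) \ge 2^{2^{q/2}}$ for every $G \notin \calU$. It therefore suffices to prove that $G \sim G^{(3)}(n, p)$ satisfies $G \notin \calU$ with high probability when $p \ge C/n^2$ and $C$ is a sufficiently large absolute constant.

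I would first verify by induction on $\ell$ that $\calU$ is closed under taking sub-hypergraphs: the cases $\calU_0$ and $\calU_1$ are immediate, since the defining $3$-partition or transversal remains a valid witness after edge removal; for $\ell \ge 2$, any collapsible set $U$ in $G$ is still collapsible in any $G' \subseteq G$ (since no edge of $G'$ can intersect $U$ in exactly two vertices if no edge of $G$ does), and the quotient and collapsed induced piece of $G'$ are sub-hypergraphs of the corresponding pieces for $G$. Consequently, to prove $G \notin \calU$ it suffices to exhibit any sub-hypergraph of $G$ not in $\calU$.

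Next, I would handle the base cases $\calU_0$ and $\calU_1$ by direct union bounds. Among the at most $3^n$ candidate $3$-partitions of $V(G)$, any near-balanced one forbids $\Theta(n^3)$ non-rainbow triples from being edges, giving probability at most $(1-p)^{\Theta(n^3)} = e^{-\Theta(Cn)}$; for $C$ sufficiently large, this defeats the $3^n$ factor and yields $\Pr[G \in \calU_0] = o(1)$. An analogous computation over the $2^n$ candidate transversal sets gives $\Pr[G \in \calU_1] = o(1)$.

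The main obstacle is extending this to $\calU_\ell$ for $\ell \ge 2$. I would proceed by induction on $\ell$: $G \in \calU_\ell \setminus \calU_{\ell-1}$ means $G$ is reducible via some collapsible $U$ to $(H, F)$ with $H \in \calU_{\ell-1}$ and $F = G[U] \in \calU_\ell$. A union bound over the $\le 2^n$ choices of $U$ then reduces the task to controlling $\Pr[H \in \calU_{\ell-1},\, F \in \calU_\ell]$. The subtle regime is $|U|$ small, where $F$ is typically empty (hence trivially in $\calU$) and the argument must rely on $H \notin \calU_{\ell-1}$: here $H$ is essentially a random $G^{(3)}(n-|U|,p)$ together with star-type edges through a new vertex $v^*$ at rate $\approx |U|p$, which is still $o(1)$, so the same base-case union bound applies to $H$ and the induction propagates. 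The chief technical difficulty is to keep the total union bound $o(1)$ uniformly in $|U|$ and in $\ell \le n$, particularly in the intermediate regime $|U| = \Theta(\sqrt{n})$ where neither $F$ nor $H$ has a cleanly random structure.
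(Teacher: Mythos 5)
Your reduction to showing $G \notin \calU$ with high probability (via Lemma~\ref{lem:lb-double-exp}, or equivalently Theorem~\ref{thm:3-graph}\ref{thm:double-exp}) is the right starting point, and the closure of $\calU$ under taking sub-hypergraphs is correct and used in the paper. However, the core of your argument --- a union bound over collapsible sets $U$, iterated over the recursive levels $\ell$ --- has a genuine gap that you acknowledge but do not resolve, and it is not merely a ``technical difficulty'': it is the heart of the problem. The issue is that membership in $\calU$ is certified by an entire tree of nested reductions, not a single set $U$. A union bound must range over all such reduction sequences, and their number is far larger than $2^n$. Worse, for a set $U$ with $|U| = \Theta(\sqrt{n})$, the event that $U$ is collapsible (no edge meeting $U$ in exactly two vertices) has probability $(1-p)^{\Theta(n^2)} = e^{-\Theta(C)}$, a constant bounded away from $0$, and there are $\binom{n}{\Theta(\sqrt{n})}$ such sets. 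The naive union bound over one level of reduction already blows up, before the recursion even begins. Your description of $H$ as ``essentially $G^{(3)}(n-|U|,p)$ plus a random star'' also ignores the conditioning on $U$ being collapsible, which introduces dependencies among the edges through $v^*$.

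The paper's proof avoids all of this by being essentially deterministic after a single conditioning step. It fixes one pseudorandomness property --- for every three sets $A_1, A_2, A_3$ of size at least $n/100$, the number of edges with one vertex in each $A_i$ is at least $Cn/10^9$ --- which holds whp by a Chernoff bound and a union bound over $O(8^n)$ triples of sets (which does close, since the failure probability per triple is $e^{-\Theta(Cn)}$). Conditioning on this property, the paper then argues \emph{deterministically} that $G \notin \calU$: it runs a process that follows an arbitrary putative reduction sequence certifying $G \in \calU$, maintaining a current hypergraph $G_i$ whose vertices represent collapsed blocks of $V(G)$, a set $T_i$ of discarded vertices, and invariants controlling the block sizes and $|T_i|$ (all below $n/100$). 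The pseudorandomness property forces a contradiction at every step --- if any block or $T_i$ grows to size $n/100$, there must be a ``forbidden'' edge --- yet the process must terminate since $|V(G_i)|$ strictly decreases. This single-conditioning, deterministic-process structure is the key idea missing from your proposal, and it is what lets the argument sidestep the compounding union bounds that stall yours.
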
    
    \begin{proof}
        Using a standard Chernoff bound (see e.g.~\cite{alon-spencer}), it is easy to show that with high probability, 
        \begin{equation} \label{eq:edge-distr}
            |E(G) \cap (A_1 \times A_2 \times A_3)| \ge Cn / 10^9, \forall A_1, A_2, A_3 \subseteq V(G), |A_i| \ge n/100, \forall i \in[3].
        \end{equation}
        Conditioning on \eqref{eq:edge-distr}, we show that $G \not\in \calU,$ which would complete the proof by Lemma~\ref{lem:lb-double-exp}.

        Let us first informally explain the ideas of the proof. If $G \in \calU,$ then $G \in \calU_1$ or there is a collapsible set $U \subseteq V(G)$ such that $G$ is reducible to $(H, G[U])$ by collapsing $U$, where $H, G[U] \in \calU.$ If $|U| < n/2,$ next consider the hypergraph $G_2 = H$ and otherwise we ``put aside'' the vertices $V(G) \setminus U$ and consider the hypergraph $G_2 = G[U].$ Note that this way, $|V(G_2)| \ge |V(G)| / 2.$ By assumption, we have $G_2 \in \calU$ so we can apply the same reasoning as above. In general, at each step we have a hypergraph $G_i$ whose each vertex corresponds to a collapsed set or a single vertex in $G.$ Now, suppose that at some point we have in total put aside a set $T$ of at least $n / 100$ vertices. Since we never put aside more than half of the current number of vertices, we have $|T| < 0.99n$ so by~\eqref{eq:edge-distr}, in $G$ there is an edge with two vertices in $V(G) \setminus T$ and one vertex in $T.$ However this contradicts the fact that we only put aside vertices outside some collapsible set.
        
        Similarly, we can show that no vertex in $V(G_i)$ represents a set of more than $n / 100$ vertices of $G.$ Indeed, if in some step we collapse a set $U \subseteq V(G_i)$ representing in total at least $n / 100$ vertices of $G$ but no more than $0.99n$, by \eqref{eq:edge-distr}, in $G$ there is an edge with two vertices represented by $U$ and one vertex not represented by $U,$ a contradiction.
        
        On the other hand, if no vertex of $G_i$ represents more than $n / 100$ vertices, we can group the vertices of $G_i$ into four sets, where each set represents a set of at least $n / 100$ vertices of $G,$ which, by \eqref{eq:edge-distr}, implies that $G_i \not\in \calU_1.$ Therefore, for any $i,$ $G_i$ we can define a new hypergraph $G_{i+1}$ as above. However, clearly this process cannot go on indefinitely, which will yield a contradiction.

        We proceed to the formal proof. For the sake of contradiction, suppose $G \in \calU.$ Now, we run the following algorithm in steps $i=1,\dots$ At each step, we have a set $T_i \subseteq V(G),$ and a hypergraph $G_i,$ where each vertex $v \in V(G_i)$ is labelled with a set $S_i(v) \subseteq V(G)$ such that the sets $(S_i(v))_{v \in V(G_i)}$ partition $V(G) \setminus T_i.$ The hypergraph $G_i$ will correspond to a hypergraph obtained from $G$ after several reductions and a set $S_i(v)$ indicates that $v$ is a vertex representing the collapsed set (possibly in more than one step) $S_i(v)$. Formally, we always have 
        \begin{equation} \label{eq:G-def}
             E(G_i) = \{ v_1v_2v_3 \, \vert \, \exists e \in E(G), |e \cap S_i(v_j)| = 1, \forall j \in [3]\}.
        \end{equation}
        For $U \subseteq V(G_i),$ we denote $S_i(U) = \bigcup_{v \in U} S_i(v)$ and we denote its \emph{weight} by $w_i(U) = |S_i(U)|.$  We shall maintain the following:
        
        \begin{enumerate}[label=(\roman*)]
            \item $G_i \in \calU.$ \label{G_i-in-U}
            \item For any $v \in V(G_i), w_i(\{v\}) < n/100.$ \label{small-weights}
            \item $|T_i| < n/100$ and for any $e \in E(G), |e \cap T_i| \neq 1.$ \label{small-T}
            \item For any $e \in E(G)$ and any $v \in V(G_i),$ it holds that $|e \cap S_i(v)| \neq 2$. \label{non-degenerate}
        \end{enumerate}
               
       Initially, we set $G_1 = G,$ $S_1(v) = \{v\}, \forall v \in V(G)$ and $T_1 = \emptyset.$ Then, we proceed in steps $i=1,\dots$ as follows.

        By assumption, $G_i \in \calU.$ Suppose first that $G_i \in \calU_1,$ that is, there is a subset $W \subseteq V(G_i)$ such that any edge in $G_i$ intersects $W$ in exactly one vertex. Hence, either $W$ or $V(G_i) \setminus W$ is an independent set in $G_i$ with weight at least $n / 4.$ Let $I$ denote this independent set. Since $w(\{v\}) < n/100$ for any $v \in V(G_i),$ $I$ can be partitioned into three sets $A_1, A_2, A_3,$ with $w(A_i) \ge n/100,$ for all $i \in [3].$ However, by definition of $G_i$, this implies $E(G) \cap (A_1 \times A_2 \times A_3) = \emptyset,$ contradicting \eqref{eq:edge-distr}.

        Hence, $G_i \not\in \calU_1,$ implying that there is a collapsible subset $U_i \subseteq G_i$ such that $G_i[U_i] \in \calU$ and the hypergraph $H$ obtained by collapsing $U_i$ is also in $\calU.$ We consider two cases. 
        
        First, suppose that $w_i(U_i) \le n/2.$ Let us show that then $|w_i(U_i)| < n/100.$ Otherwise by \eqref{eq:edge-distr}, $G$ has an edge in $S_i(U_i) \times S_i(U_i) \times S_i(V(G) \setminus (T_i \cup U_i)).$ Such an edge cannot have two vertices in the same set $S_i(v)$ by Property~\ref{non-degenerate}. On the other hand, if all three of its vertices lie in different sets $S_i(v),$ this contradicts that $U_i$ is collapsible in $G_i,$ so indeed we have $|w_i(U_i)| < n/100.$ Now, we let $G_{i+1}$ be the hypergraph obtained from $G_i$ by collapsing $U_i$ and let $T_{i+1} = T_i.$ For any $v \in V(G_i) \setminus U_i,$ we let $S_{i+1}(v) = S_i(v)$ and for the new vertex $v^* \in V(G_{i+1})$ representing the collapsed set $U_i,$ we let $S_{i+1}(v^*) = \cup_{v \in U_i} S_i(v).$ Let us verify that Propeties~\ref{G_i-in-U}--\ref{non-degenerate} for $i+1.$ Property~\ref{G_i-in-U} holds by assmption, \ref{small-weights} still holds because $w_i(U_i) < n/100,$ \ref{small-T} is immediate since $T_{i+1} = T_i$ and finally, Property~\ref{non-degenerate} holds since $U_i$ is a collapsible set in $G_i.$

        Secondly, suppose that $w_i(U_i) > n/2.$ Denote $T_{i+1} = T_i \cup S_i(V(G_i) \setminus U_i),$ let $G_{i+1} = G_i[U_i]$ and $S_{i+1}(v) = S_i(v)$ for all $v \in U_i.$ Let us verify the invariants. Property~\ref{G_i-in-U} is given by the assumption, while properties \ref{small-weights} and \ref{non-degenerate} are immediate since $S_{i+1}(v) = S_i(v)$ for all $v \in U_i = V(G_{i+1}).$ Let us check Property~\ref{small-T}. Suppose first there is an edge $e \in E(G)$ such that $|e \cap T_i| = 1.$ Then, it has two vertices inside $S_i(U_i)$ and by Property~\ref{non-degenerate}, these two vertices are in distinct sets $S_i(v), S_i(v').$ However, this contradicts the fact that $U_i$ is collapsible in $G_i,$ proving the second part of~\ref{small-T}. Finally, we show that $|T_{i+1}| < n/100.$ Suppose otherwise. Recall that $G$ has no edges touching $T_i$ in exactly one vertex. Since $U_i$ is collapsible in $G_i,$ it follows that $G$ has no edges touching $T_{i+1}$ in exactly one vertex either. However, we have that $n/100 \le |T_{i+1}| \le n/2,$ which yields a contradiction to \eqref{eq:edge-distr} by taking the sets $V(G) \setminus T_i, V(G) \setminus T_i, T_i.$

        To conclude, in each step $i=1,\dots$ we obtain a new hypergraph $G_{i+1}$ still satisfying all the invariants. However, we always have $|V(G_{i+1})| < |V(G_i)|$ so the process cannot run indefinitely, a contradiction.                
    \end{proof}
    We remark that considering the process of collapsing sets is in some sense necessary in the proof above. Indeed, one might hope to prove Proposition~\ref{prop:random} by finding a fixed hypergraph $H \not\in \calU$ such that $H$ appears in $G \sim G^{(3)}(n, C/n^2)$ with high probability. This is however not possible. Indeed, for any fixed $k,$ the expected number of sets of $2k$ vertices spanning at least $k$ edges is $O(n^{2k} p^k) = O(C^k).$ By the Poisson paradigm, it follows that with probability $\Omega(1),$ $G$ does not have $2k$ vertices spanning at least $k$ edges for any fixed $k$. Thus, every subgraph $H$ of $G$ on at most $2k$ vertices has an edge whose all but at most one vertex has degree one in $H$. Collapsing this edge we get a new hypergraph, again having ratio less than $1/2$ between number of edges and vertices and therefore we can continue collapsing. This implies that with positive probability $G$ does not contain a fixed hypergraph not in $\calU.$ 
    
    Note that the only property of the random $3$-uniform hypergraph we used in the proof of Proposition~\ref{prop:random} is \eqref{eq:edge-distr}, i.e. that for any three sets of size at least $n/100,$ there is an edge with a vertex in each of the sets. The same property holds for most Steiner triple systems. This was proven in a stronger form implicitly by Kwan~\cite{kwan2020almost} and later stated by Ferber and Kwan~\cite[Theorem~8.1]{ferber2020almost}. Therefore we obtain the following corollary.
    \begin{corollary}
     A random  Steiner triple systems with high probability has double-exponential multicolor Ramsey numbers.
    \end{corollary}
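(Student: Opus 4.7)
The plan is to leverage the machinery of Proposition~\ref{prop:random} wholesale. A careful inspection of that proof shows that the only probabilistic input actually used is the edge-distribution property~\eqref{eq:edge-distr}: every three disjoint vertex sets of size at least $n/100$ span an edge meeting each set in exactly one vertex. Everything after the appeal to~\eqref{eq:edge-distr}---the inductive collapsing process, the invariants (i)--(iv), the weight bookkeeping, and the eventual derivation of $G \notin \calU$---is a purely deterministic consequence of this single combinatorial property. Hence the corollary reduces to showing that a uniformly random Steiner triple system on $[n]$ satisfies~\eqref{eq:edge-distr} with high probability, after which the argument of Proposition~\ref{prop:random} transfers verbatim to produce $G \notin \calU$, and Lemma~\ref{lem:lb-double-exp} then supplies the bound $r(G;q) > 2^{2^{q/2}}$ for every even~$q$.

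To establish~\eqref{eq:edge-distr} for random Steiner triple systems, I would directly invoke the quasirandomness result of Kwan~\cite{kwan2020almost}, packaged conveniently by Ferber and Kwan as~\cite[Theorem~8.1]{ferber2020almost}. In the form we need, that theorem asserts that with high probability, for every triple of disjoint sets $A_1, A_2, A_3 \subseteq [n]$ with $|A_i| \ge n/100$, the number of edges of $G$ with one vertex in each $A_i$ is concentrated around its expectation of order $\Theta(n)$, uniformly over all such triples simultaneously. In particular such an edge exists, which is exactly~\eqref{eq:edge-distr}.

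Combining the two ingredients, a uniformly random Steiner triple system $G$ on $n$ vertices satisfies $G \notin \calU$ with high probability, and then by the contrapositive of Lemma~\ref{lem:lb-double-exp} we obtain $r(G;q) \ge 2^{2^{q/2}}$, the claimed double-exponential lower bound; the matching upper bound $r(G;q) \le 2^{2^{O(q \log q)}}$ is the Erd\H{o}s--Rado tower bound for cliques that was already invoked in Part~\ref{thm:double-exp} of Theorem~\ref{thm:3-graph}. The only delicate point---and the main obstacle---is verifying that the Kwan/Ferber--Kwan quasirandomness statement is phrased strongly enough to hold \emph{uniformly} over all linear-size triples of sets, rather than merely for a fixed such triple. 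However, their conclusion is considerably more refined than what~\eqref{eq:edge-distr} requires, so after unpacking the statement this should be essentially immediate, and no new combinatorial work is needed.
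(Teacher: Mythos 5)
Your proposal is correct and matches the paper's own reasoning essentially verbatim: the paper explicitly remarks that the only probabilistic input to Proposition~\ref{prop:random} is property~\eqref{eq:edge-distr}, and then cites Kwan~\cite{kwan2020almost} and Ferber--Kwan~\cite[Theorem~8.1]{ferber2020almost} for the fact that random Steiner triple systems satisfy it with high probability, exactly as you do. One minor point worth noting if you write this up carefully: the paper's application of~\eqref{eq:edge-distr} inside Proposition~\ref{prop:random} uses triples of sets that need not be disjoint (e.g.\ $V(G)\setminus T_i,\, V(G)\setminus T_i,\, T_i$), so when transferring the quasirandomness statement you should check that the version you invoke covers overlapping or repeated sets, or else note that this reduces to the disjoint case by a routine partitioning argument.
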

   
    However, this is not the case for all Steiner triple systems. Indeed, let $m \ge 2,$ and consider the Steiner triple system $G$ on the vertex set $V(G) = \mathbb{F}_2^m \setminus \{0 \}$ where a triple $\bx\by\bz$ forms an edge if and only if $\bx + \by + \bz = 0.$ For $i \in [m],$ let $V_i$ be the set of vectors in $V(G)$ whose last $1$-coordinate is in the $i$-th place. The partition $V(G) = V_1 \cupdot V_2 \cupdot \dots \cupdot V_m$ shows that $G$ is forward-colorable, and hence $r(G; q) \le 2^{O(q^2 \log q)}$ by the upper bound in Theorem~\ref{thm:3-graph}~part~\ref{thm:3-graph-single}.

    \section{Concluding remarks} \label{sec:concluding}
    In this paper we determined, for any fixed $3$-uniform hypergraph $G,$ the tower height of its multicolor Ramsey number $r(G;q)$ as the number of colors tends to infinity. Several natural questions remain. The most obvious one is to resolve Problem~\ref{prob:main} for higher uniformities. We tentatively conjecture that the multicolor Ramsey number of any fixed uniform hypergraph grows as a tower of some height. A counterexample would be very interesting.

    Our methods do not seem to provide tight bounds for larger uniformities. For example, we do not know the correct answer even for the following $4$-uniform hypergraph: let $G$ be the $4$-uniform hypergraph with vertex set $A \cup B$ where $A, B$ are disjoint sets of some fixed size $t \ge 3$ and where a $4$-tuple forms an edge if and only if it intersects $A$ and $B$ in two vertices each. Since $G$ is not $4$-partite, $r(G; q)$ is at least exponential in $q$ as shown in \cite{axenovich} and we can show that $r(G; q)$ is at most double-exponential.

    For $3$-uniform hypergraphs $G \in \calU,$ our upper and lower bounds usually have different powers of $q$ in the exponent. It would be interesting to refine these bounds further. A natural simple example is the Fano plane for which we have $2^{\Omega(q)} \le r(\mathrm{Fano}; q) \le 2^{O(q^2 \log q)}$.

    It is easy to see that $r(\Star^{(3)}(4); q) = 2^{q^{1 + o(1)}}$ and Proposition~\ref{prop:example-q-squared} provides a $3$-uniform hypergraph $G$ with $r(G; q) = 2^{q^{2+o(1)}}.$ However, for each $\ell \ge 3,$ there are $3$-uniform hypergraphs $G_\ell$ for which our best upper bound is of the form $r(G_\ell; q) \le 2^{q^{\ell+o(1)}}.$ It would be interesting to determine whether this can be tight.

    \begin{problem}
        Does there exist, for every $\ell \ge 1,$ a $3$-uniform hypergraph $G_\ell$ with $r(G_\ell; q) = 2^{q^{\ell+o(1)}}$?
    \end{problem}

\vspace{2mm}
\noindent
{\bf Acknowledgements.} 
We would like to thank David Conlon for helpful comments and the anonymous referees for valuable suggestions that improved the presentation of this paper.

\end{document}